\RequirePackage[T1]{fontenc}
\documentclass[preprint,twoside,12pt]{amsart}

\usepackage{amsfonts}
\usepackage{amsmath}
\usepackage{amsthm}
\usepackage{amssymb}
\usepackage{bbm}
\usepackage{fancyhdr}
\usepackage{tikz}
\usetikzlibrary{positioning,arrows.meta}
\usepackage{etoolbox}
\usepackage{stmaryrd}
\usepackage[all]{xy}
\usepackage{physics}
\usepackage{tikz-cd}
\usepackage{mathrsfs}
\usepackage{mathtools}
\usepackage{xcolor}
\usepackage{latexsym}
\usepackage[normalem]{ulem}
\usepackage{thm-restate}
\usepackage{bussproofs}

\tikzcdset{row sep/normal=50pt, column sep/normal=50pt}

\usepackage[bookmarks=true,bookmarksdepth=3,bookmarksnumbered=true,colorlinks=false,pdfborder={0,0,0}]{hyperref} 

\makeatletter
\newcommand{\SafeTocLink}[3] {
\hyperlink{#1}{#2\hfill #3}
\patchcmd{\l@section}
{\@dottedtocline{1}{0em}{1.5em}{#1}{#2}}{\@dottedtocline{1}{0em}{1.5em}{\SafeTocLink[\@currentHref]}
{#1}{#2}}{} }{}{}
\makeatother

\theoremstyle{definition}
\newtheorem{theorem}{Theorem}[section]

\newtheorem{lemma}[theorem]{Lemma}
\newtheorem{corollary}[theorem]{Corollary}
\newtheorem{proposition}[theorem]{Proposition}
\newtheorem{definition}[theorem]{Definition}

\newtheorem{remark}[theorem]{Remark}

\newcommand{\TR}{\mathstrut \mathrm{tr} }
\newcommand{\Mor}{\mathstrut \mathrm{Mor} }
\newcommand{\id}{\mathstrut \mathrm{id} }
\newcommand{\colim}{\mathstrut \mathrm{colim} }
\newcommand{\Hom}{\mathstrut \mathrm{Hom} }
\newcommand{\Fix}{\mathstrut \boldsymbol{\mathrm{Fix}} }
\newcommand{\dom}{\mathstrut \boldsymbol{\mathrm{dom}}\,}
\newcommand{\cod}{\mathstrut \boldsymbol{\mathrm{cod}}\,}
\newcommand{\Stab}{\mathstrut \boldsymbol{\mathrm{Stab}} }
\newcommand{\Tri}{\mathstrut \boldsymbol{\mathrm{Tri}} }

\newcommand{\res}{\mathstrut \boldsymbol{\mathrm{Res}} }
\newcommand{\Ind}{\mathstrut \boldsymbol{\mathrm{Ind}} }
\newcommand{\incl}{\mathstrut \boldsymbol{\mathrm{Incl}} }
\newcommand{\Iso}{\mathstrut \boldsymbol{\mathrm{Iso}} }
\newcommand{\set}{\mathstrut \boldsymbol{\mathrm{set}} }
\newcommand{\vect}{\mathstrut \boldsymbol{\mathrm{vect}} }

\newcommand{\Pair}{\mathstrut \boldsymbol{\mathrm{Pair}} }

\newcommand{\coloreduline}[3][red]{
\tikz[baseline=(X.bese)]{\node[innee sep=0pt, outer sep=0pt](X) {\textcolor{#2}{#3}};\draw[#1, line width=0.6pt](X.south west) -- (X.south east); } }

\newcommand{\Thmref}[1]{\hyperref[#1] {\coloreduline[red]{black}{\ref*{#1}}}}

\title{\textbf{DOUBLE COSET FOR GROUPOIDS}}

\author{KEITARO SHIIZUKA}

\address{Department of Mathematics, Kindai University, 3-4-1, Kowakae, Higashi-Osaka, Osaka 577-8502, Japan}
\email{keitaro.shizuka.math@gmail.com}

\begin{document}

\maketitle

\begin{abstract}
We investigate the double cosets of a groupoid, focusing primarily on their enumeration, by means of two different approaches. The first approach extends the Cauchy-Frobenius lemma to groupoids and interprets it in terms of groupoid actions. The second approach is based on linear representations of a groupoid arising from its action on a set of functions.
\end{abstract}

\textbf{Key words and phrases} : Groupoid, groupoid action, double coset, Cauchy-Frobenius lemma, character, induced representation.

\tableofcontents

\section{Introduction}
Groupoids were introduced by H. Brandt in 1927 (see \cite{Bra27}) as  a generalization of groups. Unlike groups, a groupoid does not require that every pair of elements be composable. Many fundamental concepts and theorems in group theory such as Cayley's theorem, Lagrange's theorem, Sylow's theorem, semidirect products, centers, commutators, and solvability, have their counterparts in the context of groupoid (see \cite{Iva02}, \cite{AM20}, \cite{MP22}, \cite{BGLPT23}).
There are several equivalent ways to define groupoids and their actions. In this manuscript, we adopt a categorical framework, viewing groupoids as categories in which all morphisms are invertible. From this standpoint, an action of a groupoid $\mathcal{G}$ is described by a functor $\mathcal{G}\to \set$.
In Section $2$, we provide notations and basic definitions. In Section 3, we develop an extension of Cauchy-Frobenius theorem in the context of groupoids. In group theory, let $G$ be a group, $H,K\le G$ and $g\in G$. The double cosets $HgK$ appear as orbits of certain actions of the direct product $H\times K$ on $G$. In Section 3, we extend this viewpoint to groupoids. We interpret $\mathcal{H}g\mathcal{K}$ as an orbit of a groupoid action of the direct product groupoid $\mathcal{H}\times\mathcal{K}$. We then show that the action groupoid arising from this action is isomorphic to a comma category $\incl^\mathcal{G}_\mathcal{K}\downarrow \incl^\mathcal{G}_\mathcal{H}$, and establish the following theorem concerning the size of $\mathcal{H}g\mathcal{K}$. 
\setcounter{section}{3}
\setcounter{theorem}{3}
\begin{theorem}
Let $\mathcal{H}$ and $\mathcal{K}$ be wide subgroupoids of a connected groupoid $\mathcal{G}$.
Let $\mathcal{H}\cong\coprod_{\lambda} \mathcal{H}_{\lambda}$ and $\mathcal{K}\cong\coprod_{\mu} \mathcal{K}_{\mu}$ be their decompositions into connected components.
For $g\in\mathcal{G}_1$, let $\mathcal{H}_\lambda$ and $\mathcal{K}_\mu$ be the connected components containing  $\cod g$ and $\dom g$, respectively. Then,
\[|\mathcal{H}g\mathcal{K}|=\frac{|(\mathcal{H}_\lambda)_0||(\mathcal{K}_\mu)_0||\mathcal{H}_{\cod g}||\mathcal{K}_{\dom g}|}{|\mathcal{H}_{\cod g}\cap{}^g\mathcal{K}_{\dom g}|}.\]
holds.
\end{theorem}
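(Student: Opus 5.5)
Set $x=\cod g$ and $y=\dom g$. The double coset is $\mathcal{H}g\mathcal{K}=\{hgk : h\in\mathcal{H}_1,\ k\in\mathcal{K}_1,\ \dom h=x,\ \cod k=y\}$. I will count it as an orbit, using orbit--stabilizer for the action of $\mathcal{H}\times\mathcal{K}$ described in the introduction. To do so concretely, I fix the data of the components. Since $\mathcal{H}$ and $\mathcal{K}$ are wide, $x\in(\mathcal{H}_\lambda)_0$ and $y\in(\mathcal{K}_\mu)_0$.

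Consider the set
\[P=\{(h,k)\in\mathcal{H}_1\times\mathcal{K}_1 : \dom h=x,\ \cod k=y\}\]
together with the surjection $\phi\colon P\to\mathcal{H}g\mathcal{K}$, $\phi(h,k)=hgk$.

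\textbf{Counting $P$.} The arrows of $\mathcal{H}$ starting at $x$ are in bijection with $\coprod_{x'\in(\mathcal{H}_\lambda)_0}\mathcal{H}(x,x')$. Since $\mathcal{H}_\lambda$ is connected, each hom-set $\mathcal{H}(x,x')$ is a torsor for the vertex group $\mathcal{H}_x=\mathcal{H}_{\cod g}$. Hence there are $|(\mathcal{H}_\lambda)_0||\mathcal{H}_{\cod g}|$ choices of $h$. By the same argument there are $|(\mathcal{K}_\mu)_0||\mathcal{K}_{\dom g}|$ choices of $k$. This gives the numerator.

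\textbf{Fibres of $\phi$.} I claim each fibre of $\phi$ has size $|\mathcal{H}_{x}\cap{}^g\mathcal{K}_{y}|$, where ${}^g\mathcal{K}_y=g\mathcal{K}_yg^{-1}\subseteq\mathcal{G}_x$ (I will check this against the paper's definition of conjugation). Suppose $hgk=h'gk'$. Then $\cod h=\cod h'$ and $\dom k=\dom k'$, so $u:=h'^{-1}h\in\mathcal{H}_x$ and $v:=k'k^{-1}\in\mathcal{K}_y$, and these satisfy $ug=gv$, i.e. $u=gvg^{-1}\in\mathcal{H}_x\cap{}^g\mathcal{K}_y$. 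Conversely, each $u\in\mathcal{H}_x\cap{}^g\mathcal{K}_y$ gives the element $(hu^{-1},\,g^{-1}ugk)$ of the fibre over $hgk$. Its second component $g^{-1}ug\,k$ lies in $\mathcal{K}$ and has codomain $y$, and the product equals $hu^{-1}\cdot g\cdot g^{-1}ugk=hgk$. So the group $\mathcal{H}_x\cap{}^g\mathcal{K}_y$ acts freely on each fibre: freeness holds because the first coordinate $hu^{-1}$ determines $u$. It also acts transitively, by the computation just made. Dividing $|P|$ by the common fibre size gives the formula.

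\textbf{Main obstacle.} The real content lies in the fibre computation: checking the composability and domain/codomain conventions, and that $g^{-1}ug\in\mathcal{K}_y$. Alternatively, within the paper's framework, the fibre computation can be phrased as an orbit--stabilizer statement for the action groupoid, using the isomorphism with the comma category $\incl^\mathcal{G}_\mathcal{K}\downarrow\incl^\mathcal{G}_\mathcal{H}$ and the groupoid Cauchy--Frobenius setup. In that phrasing the stabilizer of $g$ is identified with $\mathcal{H}_{\cod g}\cap{}^g\mathcal{K}_{\dom g}$. The direct bijection above is the plan I would execute.
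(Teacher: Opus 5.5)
Your proposal is correct and is essentially the paper's argument: count the pairs $(h,k)$ with $\dom h=\cod g$ and $\cod k=\dom g$ via the torsor property (the paper's Lemma~\ref{re0}), then divide by $|\mathcal{H}_{\cod g}\cap{}^g\mathcal{K}_{\dom g}|$ using $h'^{-1}h=gk'k^{-1}g^{-1}$. Your explicit free and transitive action $u\cdot(h,k)=(hu^{-1},\,g^{-1}ugk)$ on each fibre also supplies the converse direction (every element of the intersection actually occurs in each fibre), which the paper leaves implicit when it performs the division.
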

\setcounter{section}{0}

In Section 4 and 5, we approach the problem from the viewpoint  of the linear representation theory of groupoids. Attempts to extend the representation theory of finite groups to that of groupoids have been studied by Barbar\'{a}n S\'{a}nchez et al. \cite{BE19} and Ibort et al. \cite{IR19}. In the representation theory of groups, it is a well-known fact that the permutation module $\mathbb{C}[G/H]$ can be identified with the space of functions $f:G\to\mathbb{C}$ that are constant on the left cosets of $H$. In this manuscript, we present an attempt to generalize the fact from group theory to the setting of groupoids. We prove that the representation 
$\mathbb{C}[\mathcal{G}/\mathcal{H}]$ in the representation theory of groupoids can be expressed as an induction from the trivial representation via Kan extensions, we also prove the following theorem. 
\setcounter{section}{5}
\setcounter{theorem}{3}
\begin{theorem}
Let $\mathcal{H}$ and $\mathcal{K}$ be connected wide subgroupoids of $\mathcal{G}$. Then there exists an isomorphism of $\mathbb{C}$-vector spaces \[\mathbb{C}_{\mathcal{H}\backslash\mathcal{G}/\mathcal{K}}\cong \vect^\mathcal{G}(Y_\mathcal{H},Y_\mathcal{K}).\]
\end{theorem}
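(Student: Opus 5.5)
The plan is to reduce the statement to a computation of invariants, using the description of $Y_\mathcal{H}$ as an induced representation obtained earlier. By the result of Section 5 identifying $Y_\mathcal{H}=\mathbb{C}[\mathcal{G}/\mathcal{H}]$ with the left Kan extension $\Ind_\mathcal{H}^\mathcal{G}\mathbbm{1}=\mathrm{Lan}_{\incl^\mathcal{G}_\mathcal{H}}\mathbbm{1}$ of the trivial representation of $\mathcal{H}$, I would first rewrite the Hom-space using the Kan extension adjunction $\mathrm{Lan}\dashv\res$:
\[\vect^\mathcal{G}(Y_\mathcal{H},Y_\mathcal{K})\cong \vect^\mathcal{G}(\Ind_\mathcal{H}^\mathcal{G}\mathbbm{1},Y_\mathcal{K})\cong \vect^\mathcal{H}(\mathbbm{1},\res^\mathcal{G}_\mathcal{H}Y_\mathcal{K}).\]
All of these are natural isomorphisms of $\mathbb{C}$-vector spaces. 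This is the groupoid analogue of Frobenius reciprocity, and it is the step that uses the Kan-extension description.

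Next I would describe the right-hand side concretely. A natural transformation $\mathbbm{1}\Rightarrow\res Y_\mathcal{K}$ is a family $(v_x)_{x\in\mathcal{G}_0}$ with $v_x\in Y_\mathcal{K}(x)$ and $Y_\mathcal{K}(h)v_x=v_y$ for every $h\colon x\to y$ in $\mathcal{H}$. Here $Y_\mathcal{K}(x)$ has the basis of left cosets $g\mathcal{K}$ with $\cod g=x$, and $\mathcal{G}$ acts by left multiplication. Writing each $v_x=\sum c(g\mathcal{K})\,g\mathcal{K}$, the compatibility condition says exactly that the coefficient function $c$ on $\coprod_x\mathcal{G}^x/\mathcal{K}$ satisfies $c(hg\mathcal{K})=c(g\mathcal{K})$ whenever $hg$ is defined with $h\in\mathcal{H}_1$. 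Thus $c$ is constant on the orbits of the $\mathcal{H}$-action on left $\mathcal{K}$-cosets.

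The remaining step is to identify these orbits with the double cosets $\mathcal{H}g\mathcal{K}$. Sending the orbit of $g\mathcal{K}$ to $\mathcal{H}g\mathcal{K}$ gives a well-defined bijection, using the orbit description of double cosets from Section 3. Consequently the space of invariant families is the space of functions constant on double cosets, which is $\mathbb{C}_{\mathcal{H}\backslash\mathcal{G}/\mathcal{K}}$. Finite-dimensionality lets me pass freely between the "functions" and "formal combinations" pictures.

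I expect the main obstacle to be checking carefully that the connectedness hypotheses make the Kan-extension identification and the coset description valid globally. Connectedness of $\mathcal{H}$ matters in two places: the induction formula for $Y_\mathcal{H}$, and the fact that an invariant family is determined by its behaviour on one $\mathcal{H}$-orbit system without any extra component bookkeeping. Connectedness of $\mathcal{K}$ is needed so that $Y_\mathcal{K}(x)$ really is spanned by the left cosets $g\mathcal{K}$ with $\cod g=x$. I would verify naturality of each isomorphism in this setting, and that the orbit-to-double-coset map is injective; injectivity follows because $hg\mathcal{K}=g'\mathcal{K}$ is precisely the condition $g'\in\mathcal{H}g\mathcal{K}$.
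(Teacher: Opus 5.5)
Your proof is correct, but it takes a different route from the paper's. The paper never uses the adjunction $\Ind^\mathcal{G}_\mathcal{H}\dashv\res^\mathcal{G}_\mathcal{H}$ in this proof. Instead it attaches to each $\phi\in\mathbb{C}_{\mathcal{H}\backslash\mathcal{G}/\mathcal{K}}$ the explicit convolution operator $S(\phi)_G\colon f\mapsto\bigl(g\mapsto\frac{1}{|(\Iso\,\mathcal{G})_1|}\sum_{x\in\Mor_\mathcal{G}(-,\cod g)}\phi(x^{-1}g)f(x)\bigr)$. It then defines $T(\psi)(g):=\psi_{\cod g}(\delta_{\cod g})(g)$ and checks $T\circ S=\id$ and $S\circ T=\id$ by hand; the latter uses that $Y_\mathcal{H}(G)$ is spanned by the translates $k_i*\delta_{\dom k_i}$.

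We have $\delta_G=\frac{|\mathcal{G}_G|}{|\mathcal{H}_G|}\delta_{\id_G\mathcal{H}}$, and $\delta_{\id_G\mathcal{H}}$ corresponds to the image of $1$ under the unit $\Tri\to\res^\mathcal{G}_\mathcal{H}\Ind^\mathcal{G}_\mathcal{H}\Tri$. So the paper's $T$ is, up to this constant, exactly your composite: Frobenius reciprocity followed by the invariants computation. Reciprocity is available because $\Ind^\mathcal{G}_\mathcal{H}$ is defined as a left adjoint, and Theorems \ref{ind} and \ref{R} give $Y_\mathcal{H}\cong\Ind^\mathcal{G}_\mathcal{H}\Tri$. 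Your route gets bijectivity for free from the adjunction, whereas the paper has to verify step (iv).

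Your invariants step is even shorter than you make it. An $\mathcal{H}$-invariant family $(v_x)$ with $v_x\in Y_\mathcal{K}(x)$ glues to $c(g):=v_{\cod g}(g)$. Right $\mathcal{K}$-invariance of each $v_x$, together with $v_y=v_x(h^{-1}-)$, is literally the defining condition of $\mathbb{C}_{\mathcal{H}\backslash\mathcal{G}/\mathcal{K}}$. So no separate orbit-to-double-coset bijection is needed.

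In exchange, the paper's route yields an explicit intertwiner for each bi-invariant function, a Hecke-algebra-type description. That is also where its connectedness hypotheses enter: through the normalizations $|(\Iso\,\mathcal{G})_1|\,|\mathcal{H}_G|=|(\Iso\,\mathcal{H})_1|\,|\mathcal{G}_G|$ and $|\Mor_\mathcal{H}(-,G)|=|(\Iso\,\mathcal{H})_1|$ in steps (iii) and (iv).

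Contrary to your closing remarks, your argument never uses connectedness of $\mathcal{H}$ or $\mathcal{K}$. Wideness alone gives $\Ind^\mathcal{G}_\mathcal{H}\Tri(G)\cong\mathbb{C}[\Mor_\mathcal{G}(-,G)/{\sim_{\mathcal{H},G}}]$, and it also gives the coset basis of $Y_\mathcal{K}(x)$.
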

The following result holds as a corollary to the theorem above. \begin{corollary}
Let $\mathcal{H}$ and $\mathcal{K}$ be connected wide subgroupoids of $\mathcal{G}$. 
Then, 
\[
|\mathcal{H}\backslash\mathcal{G}/\mathcal{K}|=\langle \chi_{\Ind^\mathcal{G}_\mathcal{H}\Tri},\chi_{\Ind^\mathcal{G}_\mathcal{K}\Tri} \rangle\]
holds.
\end{corollary}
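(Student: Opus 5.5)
The plan is to deduce the corollary from Theorem 5.4 by taking dimensions on both sides and then expressing the dimension of the Hom-space through characters.

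\textbf{Step 1: the left-hand side.} By Theorem 5.4 there is an isomorphism of $\mathbb{C}$-vector spaces $\mathbb{C}_{\mathcal{H}\backslash\mathcal{G}/\mathcal{K}}\cong \vect^\mathcal{G}(Y_\mathcal{H},Y_\mathcal{K})$. The space $\mathbb{C}_{\mathcal{H}\backslash\mathcal{G}/\mathcal{K}}$ is the space of functions on $\mathcal{G}_1$ that are constant on double cosets. It therefore has the indicator functions of the double cosets as a basis, so its dimension is $|\mathcal{H}\backslash\mathcal{G}/\mathcal{K}|$.

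\textbf{Step 2: the right-hand side.} Earlier in Section 4 the paper identifies $Y_\mathcal{H}$, the representation $\mathbb{C}[\mathcal{G}/\mathcal{H}]$, with the induction $\Ind^\mathcal{G}_\mathcal{H}\Tri$ defined by Kan extension. The same holds for $\mathcal{K}$. So the right-hand side is $\vect^\mathcal{G}(\Ind^\mathcal{G}_\mathcal{H}\Tri,\Ind^\mathcal{G}_\mathcal{K}\Tri)$.

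\textbf{Step 3: dimension of Hom as a character inner product.} This is the main obstacle. We need the groupoid analogue of $\dim\Hom_G(V,W)=\langle\chi_V,\chi_W\rangle$, which should be among the results on groupoid characters in Section 4 (following \cite{BE19}, \cite{IR19}). If it must be proved, I would argue as follows.
\begin{itemize}
\item Since $\mathcal{G}$ is connected, a representation is determined by its restriction to a vertex group $\mathcal{G}_x$. More precisely, $\vect^\mathcal{G}\simeq \vect^{\mathcal{G}_x}$, and Hom-spaces are preserved by this equivalence.
\item Restriction to the vertex group is compatible with the inner product, provided the groupoid inner product is normalised as an average over $\mathcal{G}_1$. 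Both the numerator and the denominator then pick up the same factor $|\mathcal{G}_0|^2$.
\item It then remains to apply the classical identity $\dim\Hom_{\mathcal{G}_x}(V_x,W_x)=\langle\chi_{V_x},\chi_{W_x}\rangle$. This uses complete reducibility (Maschke) together with the orthogonality of irreducible characters.
\end{itemize}
The delicate point is to check that the paper's normalisation of $\langle-,-\rangle$ matches this. In particular, the characters vanish off the loops at a vertex, so only endomorphism arrows contribute to the sum.

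\textbf{Step 4: conclusion.} Combining Steps 1–3 gives
\[
|\mathcal{H}\backslash\mathcal{G}/\mathcal{K}|=\dim\vect^\mathcal{G}(Y_\mathcal{H},Y_\mathcal{K})=\langle \chi_{\Ind^\mathcal{G}_\mathcal{H}\Tri},\chi_{\Ind^\mathcal{G}_\mathcal{K}\Tri} \rangle.
\]
The characters of these permutation representations are real-valued. Hence the order of the arguments, and any complex conjugation in the inner product, is harmless.
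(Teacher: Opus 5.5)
Your chain of identifications is the same as the paper's. Theorems~\ref{dim} and~\ref{hom} give $|\mathcal{H}\backslash\mathcal{G}/\mathcal{K}|=\dim\vect^\mathcal{G}(Y_\mathcal{H},Y_\mathcal{K})$. Theorem~\ref{R} gives $Y_\mathcal{H}\cong\mathbb{C}[\mathcal{G}/\mathcal{H}]$; it is in Section~5, not Section~4. Theorem~\ref{ind} gives $\mathbb{C}[\mathcal{G}/\mathcal{H}]\cong\Ind^\mathcal{G}_\mathcal{H}\Tri$. For the last step the paper does not prove the Hom--character identity; it imports it from \cite[Theorem 4]{IR19} via Corollary~\ref{product} in the preliminaries. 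As a citation-based proof, yours is fine.

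The fallback proof you sketch for Step 3 would fail as written. You claim the identity holds if $\langle-,-\rangle$ is an average over $\mathcal{G}_1$, because numerator and denominator both acquire $|\mathcal{G}_0|^2$. But $\chi_R$ vanishes on non-loops, as you observe yourself, so $\sum_{g\in\mathcal{G}_1}\chi_R(g)\overline{\chi_{R'}(g)}=\sum_{x\in\mathcal{G}_0}\sum_{g\in\mathcal{G}_x}\chi_R(g)\overline{\chi_{R'}(g)}$ acquires only a factor $|\mathcal{G}_0|$. Meanwhile $|\mathcal{G}_1|=|\mathcal{G}_0|^2|\mathcal{G}_x|$, so an average over $\mathcal{G}_1$ gives $\dim\vect^\mathcal{G}(R,R')/|\mathcal{G}_0|$. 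The normalisation that works is the average over $(\Iso\,\mathcal{G})_1$. Since all vertex groups have the same order, this is exactly what the paper's definition $\frac{1}{|\mathcal{G}_0|}\sum_{x\in\mathcal{G}_0}\langle\chi_{R\circ\incl^\mathcal{G}_{\mathcal{G}_x}},\chi_{R'\circ\incl^\mathcal{G}_{\mathcal{G}_x}}\rangle$ amounts to.

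With that definition your argument closes, once you say explicitly why $\mathcal{G}$ is connected: $\mathcal{H}$ is a connected \emph{wide} subgroupoid. This matters, because for disconnected $\mathcal{G}$ the formula fails; two objects with only identities give $\dim\vect^\mathcal{G}(\Tri,\Tri)=2$ but $\langle\chi_{\Tri},\chi_{\Tri}\rangle=1$. Given connectedness, restriction $\vect^\mathcal{G}\to\vect^{\mathcal{G}_x}$ is an equivalence. By the classical formula every summand then equals $\dim\Hom_{\mathcal{G}_x}(R(x),R'(x))=\dim\vect^\mathcal{G}(R,R')$, and the average of these equal terms is that dimension.
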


\setcounter{section}{1}

\section{Preliminaries}
\subsection{Notations and conventions}
\begin{itemize}
\item In this manuscript, we use calligraphic notation (such as $\mathcal{G}$) to represent finite groupoids. By contrast, standard symbols (such as $G$) denote either objects within a groupoid or groupoids with a single object (that is, groups).
\item A finite group $G$ is identified with its associated one-object groupoid. \item For a groupoid $\mathcal{G}$, the composition $g\circ g'$ of morphisms is written more simply as $gg'$, which serves as a convenient notational simplification when treating groupoids as extensions of groups. \item In the disjoint union $\coprod_{i\in I} A_i$ of the family of sets $\{A_i\}_{i\in I}$, the element corresponding to $x\in A_i$ is denoted by $\langle i,x \rangle$.
\item The empty category is a finite groupoid, but we exclude it from the class of finite groupoids considered in this manuscript.
\item Let $\mathfrak{U}$ be a Grothendieck universe. Throughout this manuscript, we assume that every category $\mathcal{C}$ is $\mathfrak{U}$-small, meaning that the collection of objects $\mathcal{C}_0$ and the collection of morphisms $\mathcal{C}_1$ both belong to $\mathfrak{U}$.
\item For a category $\mathcal{C}$, the maps $\dom$ and $\cod$ are defined as follows:
\[\dom:\mathcal{C}_1\to\mathcal{C}_0\;;\;f\mapsto \dom f,\]
\[\cod:\mathcal{C}_1\to\mathcal{C}_0\;;\;f\mapsto \cod f.\]
\item Let $\mathcal{C},\mathcal{D}$ and $\mathcal{E}$ be categories and $F:\mathcal{C}\to \mathcal{D}$ a functor. Then $F$ induces a functor $F^*:\mathcal{E}^\mathcal{D}\to \mathcal{E}^\mathcal{C}$, given by $G\mapsto G\circ F$. We denote by $F^\dagger$ the left adjoint of $F^*$. In this manuscript, the functors $F^\dagger$ are constructed via Kan extensions.
\item $|X|$ denotes the cardinality of a set $X$.
\item $\set$ denotes the category of finite sets and $\vect$ denotes the category of finite dimensional vector spaces over $\mathbbm{C}$.
\item We let $\cong$ denote an isomorphism of categories.
\end{itemize}

\subsection{Groupoids}
In this subsection, we prepare the groundwork for the subsequent sections. Throughout this paper, we treat groupoids within a categorical framework. Algebraic approaches to the topics discussed in this section can be found in \cite{IR19}, \cite{MP22}, \cite{BE19}, \cite{AM20}.

\begin{itemize}
\item A category $\mathcal{G}$ is called a \textbf{groupoid} if every morphism in $\mathcal{G}$ is invertible. 
\item For each object $x\in\mathcal{G}_0$, the set $\mathcal{G}(x,x)$, denoted by $\mathcal{G}_x$, is referred to as the \textbf{isotropy group} at $x\in\mathcal{G}_0$. 
\item The \textbf{group bundle} $\Iso\;\mathcal{G}$ is the groupoid defined by the following conditions: 
\begin{enumerate}
\item $(\Iso\;\mathcal{G})_0:=\mathcal{G}_0,$
\item \[\Iso\;\mathcal{G}(G,G'):=\begin{cases}\mathcal{G}(G,G)&\text{if $G=G'$,}\\\emptyset&\text{if $G\neq G'$.}\end{cases}\]
\item with composition induced from $\mathcal{G}$.
\end{enumerate}

\item A \textbf{subgroupoid} $\mathcal{H}$ of a groupoid $\mathcal{G}$ is a subcategory of $\mathcal{G}$ that is itself a groupoid. If $\mathcal{H}$ contains all objects of $\mathcal{G}$, $\mathcal{H}$ is called a \textbf{wide subgroupoid} of $\mathcal{G}$. For a subgroupoid $\mathcal{H}$ of $\mathcal{G}$, then the inclusion functor is denoted $\incl^\mathcal{G}_\mathcal{H}$. 

\item For $G\in \mathcal{G}_0$, define  \[\Mor_\mathcal{G}(G,-):=\coprod_{G'\in\mathcal{G}_0}\mathcal{G}(G,G'),\;\; \Mor_\mathcal{G}(-,G):=\coprod_{G'\in\mathcal{G}_0}\mathcal{G}(G',G).\]

\item For a finite set $X$, the \textbf{pair groupoid} $\Pair(X)$ is defined by the following conditions: 
\begin{enumerate}
\item object: $\Pair(X)_0:=X,$
\item morphism: $\Pair(X)(x,y):=\{(x,y)\}$, for \;$x,y\in X,$
\item composition: The composition of $f:=(x,y)$ and $g:=(y,z)$ is defined by $gf:=(x,z)$.
\end{enumerate}
\end{itemize}

\begin{definition}
Let $\mathcal{G}$ be a groupoid. Define a relation on $\mathcal{G}_0$ by declaring $x\sim y$ if there exists a morphism $g:x\to y$. This is an equivalence relation. The quotient set $\mathcal{G}_0/{\sim}$, denoted by $\pi_0(\mathcal{G})$, is called the set of \textbf{connected components} of $\mathcal{G}$. If $\pi_0(\mathcal{G})$ consists of a single element, then $\mathcal{G}$ is said to be a \textbf{connected groupoid}.
\end{definition}

The following proposition is well known.
\begin{proposition}\label{gpdst}
Let $\mathcal{G}$ be a connected groupoid and let $x\in\mathcal{G}_0$. Then, $\mathcal{G}\cong\mathcal{G}_x\times\;\Pair(\mathcal{G}_0)$.
\end{proposition}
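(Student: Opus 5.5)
The plan is to write down an explicit isomorphism of categories $\Phi:\mathcal{G}\to\mathcal{G}_x\times\Pair(\mathcal{G}_0)$. It depends on a choice of ``connecting'' morphisms. Since $\mathcal{G}$ is connected, for each $y\in\mathcal{G}_0$ we pick a morphism $\tau_y:x\to y$, and we take $\tau_x=\id_x$. Every morphism $g:y\to z$ then determines the conjugated element
\[\tilde g:=\tau_z^{-1}\,g\,\tau_y\in\mathcal{G}(x,x)=\mathcal{G}_x.\]
We define $\Phi$ to be the identity on objects, under the identification $(\mathcal{G}_x\times\Pair(\mathcal{G}_0))_0=\{\ast\}\times\mathcal{G}_0\cong\mathcal{G}_0$. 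On morphisms we set $\Phi(g):=(\tilde g,(y,z))$.

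Next we check that $\Phi$ is a functor.
\begin{itemize}
\item \emph{Identities.} For $\id_y$ we get $\tau_y^{-1}\id_y\tau_y=\id_x$, and the pair component is $(y,y)$, which is the identity of $\Pair(\mathcal{G}_0)$ at $y$.
\item \emph{Composition.} Take $g:y\to z$ and $h:z\to w$. Then
\[\tau_w^{-1}hg\tau_y=(\tau_w^{-1}h\tau_z)(\tau_z^{-1}g\tau_y)=\tilde h\,\tilde g.\]
On the pair components, the composite of $(y,z)$ and $(z,w)$ is $(y,w)$. This matches the composition law in the product category.
\end{itemize}

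Finally we show $\Phi$ is bijective on hom-sets, using the inverse functor $\Psi$.
\begin{itemize}
\item \emph{Definition of $\Psi$.} It is the identity on objects, and it sends a morphism $(a,(y,z))$, with $a\in\mathcal{G}_x$, to $\tau_z\,a\,\tau_y^{-1}:y\to z$.
\item \emph{Inverse relations.} Directly, $\Psi\Phi(g)=\tau_z\tau_z^{-1}g\tau_y\tau_y^{-1}=g$. Likewise $\Phi\Psi(a,(y,z))=(\tau_z^{-1}\tau_z a\tau_y^{-1}\tau_y,(y,z))=(a,(y,z))$.
\end{itemize}
Hence $\Phi$ is an isomorphism of categories, not merely an equivalence. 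This matches the paper's convention that $\cong$ denotes isomorphism.

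There is no real obstacle here. The only point needing care is the notational identification of objects of the product with $\mathcal{G}_0$, because the one-object groupoid $\mathcal{G}_x$ contributes the trivial factor $\{\ast\}$. Another point is that the isomorphism is non-canonical, since it depends on the choice of the family $\{\tau_y\}$. Connectedness is used exactly once: to guarantee that each $\tau_y$ exists.
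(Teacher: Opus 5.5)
Your proof is correct and complete. $\Phi$ is the identity on objects. The identities $\widetilde{hg}=\tilde h\,\tilde g$ and $\widetilde{\id_y}=\id_x$ give functoriality. The map $a\mapsto\tau_z a\tau_y^{-1}$ inverts $\Phi$ on each hom-set $\mathcal{G}(y,z)\to\mathcal{G}_x\times\{(y,z)\}$, so $\Phi$ is an isomorphism of categories; its inverse $\Psi$ is then automatically a functor.

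The paper gives no proof of this proposition and simply calls it well known. Your construction is the standard argument, and it uses the same device of fixed connecting morphisms $x\to y$ that the paper uses in the proof of Lemma~\ref{re0}. The normalization $\tau_x=\id_x$ is harmless but not needed; it only makes $\Phi(a)=(a,(x,x))$ for $a\in\mathcal{G}_x$.
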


\begin{lemma}\label{re0}
Let $\mathcal{G}$ be a connected groupoid.  If $x\in\mathcal{G}_0$, then $|\mathcal{G}(y,z)|=|\mathcal{G}_x|$ for all objects $y,z$.
\end{lemma}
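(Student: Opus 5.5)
The plan is to build explicit bijections between $\mathcal{G}(y,z)$ and the isotropy group $\mathcal{G}_x$ out of the invertibility of morphisms in $\mathcal{G}$. Connectedness of $\mathcal{G}$ is what supplies the morphisms used in these bijections.

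First, I show that every hom-set $\mathcal{G}(y,z)$ has the same size as $\mathcal{G}_y=\mathcal{G}(y,y)$. Since $\mathcal{G}$ is connected, $y\sim z$, so I can fix a morphism $h:y\to z$. I then define
\[
\mathcal{G}_y\to\mathcal{G}(y,z),\quad g\mapsto hg,
\qquad\text{and}\qquad
\mathcal{G}(y,z)\to\mathcal{G}_y,\quad f\mapsto h^{-1}f .
\]
These two maps are mutually inverse because $h^{-1}h=\id_y$ and $hh^{-1}=\id_z$. Hence $|\mathcal{G}(y,z)|=|\mathcal{G}_y|$.

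Second, I show that all isotropy groups have the same size. Choose any $k:x\to y$, which exists by connectedness. Conjugation $g\mapsto kgk^{-1}$ maps $\mathcal{G}_x$ to $\mathcal{G}_y$, and it is a bijection (indeed a group isomorphism) with inverse $g'\mapsto k^{-1}g'k$. Hence $|\mathcal{G}_y|=|\mathcal{G}_x|$.

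Combining the two steps gives $|\mathcal{G}(y,z)|=|\mathcal{G}_y|=|\mathcal{G}_x|$ for all objects $y,z$.

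Alternatively, the lemma follows from Proposition~\ref{gpdst}. Under the isomorphism $\mathcal{G}\cong\mathcal{G}_x\times\Pair(\mathcal{G}_0)$, each hom-set corresponds to $\mathcal{G}_x\times\{(y,z)\}$, which has exactly $|\mathcal{G}_x|$ elements.

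There is no real obstacle here. The only points needing care are the composition conventions ($gg'$ means $g\circ g'$) and checking that each composite is defined, for example that $hg$ makes sense because $\cod g=y=\dom h$.
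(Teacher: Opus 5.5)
Your proof is correct and is essentially the paper's argument. Both build explicit bijections by composing with morphisms supplied by connectedness; the only cosmetic difference is that the paper routes through $\mathcal{G}(x,y)$ via $u\mapsto f_yu$ and $a\mapsto f_za^{-1}$, whereas you route through $\mathcal{G}_y$ via left translation by $h$ and conjugation by $k$. Your alternative via Proposition~\ref{gpdst} is also valid.
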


\begin{proof}
Since $\mathcal{G}$ is connected, we can choose morphisms $f_y:x\to y$ and $f_z:x\to z$. Consider the maps
\[\begin{array}{rccc}
f:&\mathcal{G}_x&\longrightarrow &
\mathcal{G}(x,y)\\
 &\rotatebox{90}{$\in$}& &\rotatebox{90}{$\in$}\\
 &u&\longmapsto&f_yu
\end{array},
\]
\[\begin{array}{rccc}
g:&\mathcal{G}(x,y)&\longrightarrow &
\mathcal{G}(y,z)\\
 &\rotatebox{90}{$\in$}& &\rotatebox{90}{$\in$}\\
 &a&\longmapsto&f_za^{-1}
\end{array}.
\]
Since the above map is bijective, the claim follows.
\end{proof}

\begin{itemize}
\item Let $\mathcal{G}$ be a groupoid, and let $\mathcal{H}$ and $\mathcal{K}$ be subgroupoids of $\mathcal{G}$. For $g\in \mathcal{G}_1$, the $(\mathcal{H},\mathcal{K})$-double coset of $g$ is the set $\mathcal{H}g\mathcal{K}:=\{hgk\mid h\in\mathcal{H}_1,\, k\in\mathcal{K}_1,\cod k=\dom g,\,\cod g=\dom h\}$. The set of all $(\mathcal{H},\mathcal{K})$-double cosets in $\mathcal{G}$ is denoted by $\mathcal{H}\backslash \mathcal{G}/\mathcal{K}$. 

\item The functor category $\set^\mathcal{G}$ is called the \textbf{category of $\mathcal{G}$-sets}, and its objects are simply referred to as \textbf{$\mathcal{G}$-set}. 

\item For a wide subgroupoid $\mathcal{H}$ of $\mathcal{G}$ and $x\in\mathcal{G}_0$, define a relation $\sim_{\mathcal{H},x}$ on $\Mor_\mathcal{G}(-,x)$ by 
\[a\sim_{\mathcal{H},x} b \iff  a^{-1}b\in \mathcal{H}_1\] 
for all $a,b\in \Mor_\mathcal{G}(-,x)$.
\item If $\mathcal{H}$ is a wide subgroupoid of $\mathcal{G}$, then the functor $\mathcal{G}/\mathcal{H}:\mathcal{G}\to \set$ is defined by the following conditions: 
\begin{enumerate}
\item For an object $x\in \mathcal{G}_0$, define $\mathcal{G}/\mathcal{H}(x):=\Mor_\mathcal{G}(-,x)/{\sim_{\mathcal{H},x}}$.
\item For a morphism $g:x\to y$, define $\mathcal{G}/\mathcal{H}(g):\Mor_\mathcal{G}(-,x)/{\sim_{\mathcal{H},x}}\to
\Mor_\mathcal{G}(-,y)/{\sim_{\mathcal{H},y}}$ by $k\mathcal{H}\mapsto gk\mathcal{H}$.
\end{enumerate}

\end{itemize}

The following proposition is well known.

\begin{proposition}\cite[Theorem 4.4]{BGLPT23}
Let $\mathcal{H}$ be a subgroupoid of a connected groupoid $\mathcal{G}$ with connected components denoted by $\mathcal{H}_i$ and, for $e_i\in (\mathcal{H}_i)_0$, the isotropy group $H_{e_i}$ of $\mathcal{H}_i$ for $1\le i\le n$. Then \[(\mathcal{G}:\mathcal{H})=|\mathcal{H}_0|(\sum^n_{i=1}(\mathcal{G}_{e_i}:\mathcal{H}_{e_i})).\]
\end{proposition}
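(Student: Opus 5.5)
The plan is to count cosets directly by partitioning morphisms according to the connected component of $\mathcal{H}$ that contains their domain. I take $(\mathcal{G}:\mathcal{H})$ to mean the number of left cosets $g\mathcal{H}=\{gh\mid h\in\mathcal{H}_1,\ \cod h=\dom g\}$, where $g$ ranges over morphisms with $\dom g\in\mathcal{H}_0$. These cosets are the classes of the relation $g\sim g'$ iff $g^{-1}g'\in\mathcal{H}_1$, which is the same relation as $\sim_{\mathcal{H},x}$ but read on domains. As a first step I check that this is an equivalence relation on $\{g\in\mathcal{G}_1\mid \dom g\in\mathcal{H}_0\}$. Two equivalent morphisms have domains in the same connected component $\mathcal{H}_i$, because $g^{-1}g'$ is a morphism of $\mathcal{H}$ from $\dom g'$ to $\dom g$. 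So the cosets split according to the index $i$.

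Fix a component $\mathcal{H}_i$ and a morphism $g$ with $\dom g=y\in(\mathcal{H}_i)_0$. The map $h\mapsto gh$ from $\Mor_{\mathcal{H}_i}(-,y)$ onto $g\mathcal{H}$ is injective, since $g$ is invertible. Hence
\[|g\mathcal{H}|=|(\mathcal{H}_i)_0|\,|\mathcal{H}_{e_i}|,\]
using Lemma \ref{re0} applied to the connected groupoid $\mathcal{H}_i$. Next I count the morphisms with domain in $(\mathcal{H}_i)_0$. Since $\mathcal{G}$ is connected, Lemma \ref{re0} gives $|\mathcal{G}(y,z)|=|\mathcal{G}_{e_i}|$ for every pair of objects, so this number is $|(\mathcal{H}_i)_0|\,|\mathcal{G}_0|\,|\mathcal{G}_{e_i}|$. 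All cosets in this family have the same size, so their number is
\[\frac{|(\mathcal{H}_i)_0|\,|\mathcal{G}_0|\,|\mathcal{G}_{e_i}|}{|(\mathcal{H}_i)_0|\,|\mathcal{H}_{e_i}|}=|\mathcal{G}_0|\,(\mathcal{G}_{e_i}:\mathcal{H}_{e_i}).\]
Summing over $i$ gives $|\mathcal{G}_0|\sum_i(\mathcal{G}_{e_i}:\mathcal{H}_{e_i})$.

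The main obstacle is reconciling the prefactor $|\mathcal{G}_0|$ that this count produces with the factor $|\mathcal{H}_0|$ in the statement. For wide subgroupoids the two agree, and wide subgroupoids are the only case the paper uses later. For a non-wide $\mathcal{H}$, I would first check the exact convention of \cite{BGLPT23}: which morphisms are allowed as coset representatives, and whether cosets are taken as $g\mathcal{H}$ or $\mathcal{H}g$. If the source restricts representatives to morphisms with both endpoints in $\mathcal{H}_0$, or to cosets inside the full subgroupoid of $\mathcal{G}$ on $\mathcal{H}_0$, then the same computation runs with $|\mathcal{G}_0|$ replaced by $|\mathcal{H}_0|$. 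Indeed, the number of admissible codomains becomes $|\mathcal{H}_0|$ while everything else is unchanged. That is the version I would adopt to obtain the formula exactly as stated.
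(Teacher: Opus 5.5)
The paper gives no proof of this proposition. It is quoted from \cite{BGLPT23}, and the index $(\mathcal{G}:\mathcal{H})$ is not even defined in the text. So your argument is not a variant of the paper's but a self-contained replacement for the citation, and as a count it is correct. The cosets $g\mathcal{H}$ with $\dom g\in\mathcal{H}_0$ are the classes of the relation $g\sim g'$ iff $\cod g=\cod g'$ and $g^{-1}g'\in\mathcal{H}_1$. The domains in one class lie in a single component $\mathcal{H}_i$. Each such class has $|(\mathcal{H}_i)_0|\,|\mathcal{H}_{e_i}|$ elements, and there are $|(\mathcal{H}_i)_0|\,|\mathcal{G}_0|\,|\mathcal{G}_{e_i}|$ morphisms with domain in $(\mathcal{H}_i)_0$; both counts follow from Lemma~\ref{re0}. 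What the direct count buys over the citation is that it shows exactly which convention the prefactor depends on.

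Your worry about that prefactor is justified, and it is a defect of the printed statement, not of your proof. Take the standard left cosets: every $g$ with $\dom g\in\mathcal{H}_0$, with codomain unrestricted. Then the formula is $(\mathcal{G}:\mathcal{H})=|\mathcal{G}_0|\sum_i(\mathcal{G}_{e_i}:\mathcal{H}_{e_i})$, which agrees with the printed right-hand side if and only if $\mathcal{H}$ is wide. For instance, take $\mathcal{G}=\Pair(\{a,b\})$ and $\mathcal{H}$ the trivial groupoid on $\{a\}$. There are two cosets, $\{\id_a\}$ and $\{(a,b)\}$, whereas $|\mathcal{H}_0|\cdot 1=1$.

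Your restricted variant (codomains in $\mathcal{H}_0$, i.e.\ the index of $\mathcal{H}$ in the full subgroupoid of $\mathcal{G}$ on $\mathcal{H}_0$) is also correctly handled by your computation. However, it is a different statement, obtained by changing the definition to fit the formula. It is cleaner to state the result with $|\mathcal{G}_0|$, or to add the hypothesis that $\mathcal{H}$ is wide, which is the only case the paper uses afterwards.
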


Moreover, many of results in this paper provide alternative descriptions of $(\mathcal{G}:\mathcal{H})$.

\begin{itemize}
\item A functor $X:\mathcal{G}\to \vect$ is referred to as a \textbf{linear representation of $\mathcal{G}$}. In this case, the $\mathbbm{C}$-linear space $\bigoplus_{G\in\mathcal{G}_0} X(G)$ is called the \textbf{representation space} of $X$. The functor category $\vect^\mathcal{G}$ is called \textbf{the category of linear representations of $\mathcal{G}$}.

\item If $\mathcal{G}$ is a groupoid, then the trivial representation of $\mathcal{G}$ is the functor $\Tri:\mathcal{G}\to \vect$ defined by the following conditions: 
\begin{enumerate}
\item For an object $x\in \mathcal{G}_0$, define $\Tri(x):=\mathbbm{C}$.
\item For a morphism $g:x\to y $, define $\Tri(g):=\id_{\mathbbm{C}}$.
\end{enumerate}

\end{itemize}

\begin{definition}
Let $X$ be a $\mathcal{G}$-set. The \textbf{action groupoid} $\mathcal{G}\ltimes X$ is the groupoid defined by the following conditions: 
\begin{enumerate}
\item Its objects are given by $(\mathcal{G}\ltimes X)_0:=\coprod_{G\in \mathcal{G}_0} X(G).$
\item For objects $\langle G,x \rangle$ and $\langle G',x' \rangle$, the morphisms are  
\[(\mathcal{G}\ltimes X)(\langle G,x \rangle, \langle G',x' \rangle):=\{g\in\mathcal{G}(G,G')\mid X(g)(x)=x'\}.\]
\item Composition is inherited from $\mathcal{G}$.
\end{enumerate}
For $\langle G,x \rangle\in \coprod_{G\in \mathcal{G}_0} X(G)$, the \textbf{stabilizer} at $x$ is defined by $\Stab(x):=(\mathcal{G}\ltimes X)_{\langle G,x \rangle}$. A $\mathcal{G}$-set $X$ is said to be \textbf{transitive} if the $\mathcal{G}\ltimes X$ is connected.
\end{definition}

\begin{remark}\label{E}
The action groupoid $\mathcal{G}\ltimes X$ coincides with the category of elements of $X$ (the Grothendieck construction).
\end{remark}

\begin{lemma}\label{re1}
Let $F:\set\to\vect$ be the linerization functor assigning to each finite set $X$ the free complex vector space $\mathbb{C}[X]$. Then the functors $\Tri$ and $F\circ \triangle 1$ are naturally isomorphic.
\end{lemma}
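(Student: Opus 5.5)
We need to prove Lemma \ref{re1}: $\Tri\cong F\circ\triangle 1$, where $\triangle 1:\mathcal{G}\to\set$ denotes the constant functor at a one-point set $1=\{*\}$. The plan is to write down the natural isomorphism explicitly, componentwise, and then check naturality. Both functors go from $\mathcal{G}$ to $\vect$, so it suffices to give for each object $x\in\mathcal{G}_0$ an isomorphism $\eta_x:\Tri(x)\to F(\triangle 1(x))$ and to check that the naturality squares commute for every morphism $g:x\to y$.

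First I compute the target functor. On objects, $F\circ\triangle 1(x)=F(1)=\mathbb{C}[\{*\}]$, which is the one-dimensional vector space with basis $\{*\}$. On a morphism $g:x\to y$, $\triangle 1(g)=\id_{1}$, and functoriality of $F$ gives $F(\id_1)=\id_{\mathbb{C}[\{*\}]}$. Meanwhile, by definition $\Tri(x)=\mathbb{C}$ and $\Tri(g)=\id_{\mathbb{C}}$.

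Next, define $\eta_x:\mathbb{C}\to\mathbb{C}[\{*\}]$ by $c\mapsto c\,*$. This is the same map for every $x$, and it is a $\mathbb{C}$-linear isomorphism, since it sends the basis $1$ to the basis $*$. For naturality, I must check that $F(\triangle 1(g))\circ\eta_x=\eta_y\circ\Tri(g)$. Both sides reduce to $\eta$, because both vertical maps are identities and $\eta_x=\eta_y$. Hence $\eta$ is a natural transformation whose components are all isomorphisms, so $\eta$ is a natural isomorphism.

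There is no real obstacle here. The only point needing care is to be explicit that $F$ is a functor, so that it preserves identity morphisms, and that $\triangle 1$ sends every morphism of $\mathcal{G}$ to $\id_1$. With those two facts in hand, naturality is immediate.
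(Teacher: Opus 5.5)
Your proposal is correct and takes the same approach as the paper, whose proof is the single line ``This can be verified directly.'' You simply write out that verification explicitly: you define the componentwise isomorphism $\eta_x\colon\mathbb{C}\to\mathbb{C}[\{*\}]$, $c\mapsto c\,*$, and check naturality, which is trivial because $\triangle 1$ sends every morphism to $\id_1$ and $F$ preserves identities.
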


\begin{proof}
This can be verified directly.
\end{proof}

\begin{itemize}
\item  Let $X$ be a $\mathcal{G}$-set. The \textbf{permutation representation} associated with $X$ is the functor  $\mathbb{C}[X]:\mathcal{G}\to \vect$ defined by the following conditions: 
\item For an object $x\in \mathcal{G}_0$, define $\mathbb{C}[X](x):=\mathbb{C}[X(x)]$
\item For a morphism $g:x\to y$, define $\mathbb{C}[X](g):\mathbb{C}[X(x)]\to \mathbb{C}[X(y)]$ by extending $s\mapsto X(g)(s)$ $\mathbb{C}$-linearly for $s\in X(x)$.

\item For an inclusion functor $\incl^\mathcal{G}_\mathcal{H}$, define functors $\res^\mathcal{G}_\mathcal{H}:\mathcal{H}\to\vect$ and $\Ind^\mathcal{G}_\mathcal{H}:\mathcal{G}\to\vect$ by $\res^\mathcal{G}_\mathcal{H}:=(\incl^\mathcal{G}_\mathcal{H})^*$, $\Ind^\mathcal{G}_\mathcal{H}:=(\incl^\mathcal{G}_\mathcal{H})^\dagger$.

\item Let $R$ be a linear representation of $\mathcal{G}$, and let $V$ denote its representation space. For each morphism $g:x\to y$ in $\mathcal{G}$, let $\hat{R}(g):V\to V$ be the linear map induced by $R(g):R(x)\to R(y)$. The \textbf{character} of $R$ $\chi_R:\mathcal{G}_1\to\mathbb{C}$ is the map defined by
\[g\mapsto \TR\;(\hat{R}(g)).\]

\item If $\chi_R$ and $\chi_{R'}$ are characters of $R$ and $R'$, then the inner product of $\chi_R$ and $\chi_{R'}$ is defined as follows.
$\langle \chi_R,\chi_{R'} \rangle$ is the sum \[\frac{1}{|\mathcal{G}_0|}\sum_{x\in\mathcal{G}_0}\langle {\chi_{R\circ\incl^\mathcal{G}_{\mathcal{G}_x}}},{\chi_{R'\circ\incl^\mathcal{G}_{\mathcal{G}_x}}} \rangle.\]
\end{itemize}

\begin{lemma}\cite[Theorem 4]{IR19}
Let $R,R'$ be representations of $\mathcal{G}$, and let $V$ and $V'$ be the corresponding representation spaces. Then 
\[\dim \Hom_{\mathcal{G}}(V,V')=\langle \chi_R,\chi_{R'} \rangle\] holds.\end{lemma}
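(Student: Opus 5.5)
The plan is to reduce the statement to the classical group case one object at a time and then sum. A representation $R$ of $\mathcal{G}$ has representation space $V=\bigoplus_{x\in\mathcal{G}_0}R(x)$. I interpret $\Hom_{\mathcal{G}}(V,V')$ as the space of natural transformations $R\Rightarrow R'$, i.e. $\vect^{\mathcal{G}}(R,R')$.

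First, I compute the character. For a morphism $g:x\to y$ with $x\neq y$, the map $\hat{R}(g)$ sends $R(x)$ into $R(y)$ and kills the other summands. Its matrix therefore has zero diagonal blocks, so $\chi_R(g)=0$. For $g\in\mathcal{G}_x$, the map $\hat R(g)$ is $R(g)$ on $R(x)$ and zero elsewhere, so $\chi_R(g)=\chi_{R\circ\incl^\mathcal{G}_{\mathcal{G}_x}}(g)$. Hence the inner product in the definition is the average over objects of the ordinary group inner products of the restrictions to the isotropy groups $\mathcal{G}_x$. By classical character theory of finite groups, each summand equals $\dim\Hom_{\mathcal{G}_x}(R(x),R'(x))$. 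The claim is thus reduced to
\[
\dim\vect^{\mathcal{G}}(R,R')=\frac{1}{|\mathcal{G}_0|}\sum_{x\in\mathcal{G}_0}\dim\Hom_{\mathcal{G}_x}(R(x),R'(x)).
\]

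Second, I analyse natural transformations componentwise. Decompose $\mathcal{G}$ into connected components $\mathcal{G}^{(c)}$. A natural transformation is a family of compatible transformations on the components, so
\[
\vect^\mathcal{G}(R,R')\cong\prod_c\vect^{\mathcal{G}^{(c)}}(R|_{c},R'|_{c}).
\]
On a connected component, fix an object $x$ and choose $f_y:x\to y$ for every object $y$, as in Proposition \ref{gpdst} and Lemma \ref{re0}. A natural transformation $\eta$ is determined by $\eta_x$, because
\[
\eta_y=R'(f_y)\,\eta_x\,R(f_y)^{-1}.
\]
Naturality with respect to $\mathcal{G}_x$ says exactly that $\eta_x$ is $\mathcal{G}_x$-equivariant. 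Conversely, any $\mathcal{G}_x$-equivariant $\eta_x$ extends by this formula to a natural transformation. To check this, write an arbitrary $h:y\to z$ as $h=f_z u f_y^{-1}$ with $u\in\mathcal{G}_x$ and verify naturality. Therefore
\[
\dim\vect^{\mathcal{G}^{(c)}}=\dim\Hom_{\mathcal{G}_x}(R(x),R'(x)),
\]
and this number is independent of the choice of $x$ in the component, since conjugation by $f_y$ identifies the groups and the representations.

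The main obstacle is the final comparison. Summing over all objects, a component with $n_c$ objects contributes $n_c\,d_c$, where $d_c$ is its per-object dimension, whereas $\dim\vect^\mathcal{G}(R,R')=\sum_c d_c$. The normalizing factor $\frac{1}{|\mathcal{G}_0|}$ recovers this exactly when $\mathcal{G}$ is connected. In general the two sides differ: weighting each component equally instead would require a factor $1/n_c$ per component. So I would prove the lemma under the assumption that $\mathcal{G}$ is connected, which is the setting of Section 5. If the intended meaning of $\Hom_\mathcal{G}(V,V')$ is different, or the statement is meant for disconnected $\mathcal{G}$, I would adjust the normalization in the inner product to $\sum_c\frac{1}{n_c}\sum_{x\in\mathcal{G}^{(c)}_0}$ accordingly.
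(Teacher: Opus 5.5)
The paper gives no proof of this lemma; it only cites \cite[Theorem 4]{IR19}. Your argument is therefore an independent, self-contained proof, and it is correct. By the paper's definition, the inner product is an average over objects of classical inner products on the isotropy groups, and each term equals $\dim\Hom_{\mathcal{G}_x}(R(x),R'(x))$. Your formula $\eta_y=R'(f_y)\eta_xR(f_y)^{-1}$, with the naturality check via $h=f_zuf_y^{-1}$, shows that on a connected component, restricting to one object identifies natural transformations with $\mathcal{G}_x$-equivariant maps. In effect, $\mathcal{G}_x\hookrightarrow\mathcal{G}^{(c)}$ is an equivalence, so restriction is fully faithful.

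Compared with the bare citation, your route works directly with the paper's own definitions, and it exposes the hypothesis the statement actually needs. With the normalization $\frac{1}{|\mathcal{G}_0|}$ used here, the identity holds for connected $\mathcal{G}$ but fails in general. Concretely, let $\mathcal{G}$ have two objects and only identity morphisms, and take $R=R'=\Tri$. Then $\dim\vect^{\mathcal{G}}(R,R')=2$, while $\langle\chi_R,\chi_{R'}\rangle=\frac{1}{2}(1+1)=1$. This does not affect the paper's main result. The lemma is used only in the final corollary, where $\mathcal{G}$ contains a connected wide subgroupoid and is therefore itself connected. Your proposed per-component normalization $\sum_c\frac{1}{n_c}\sum_{x\in\mathcal{G}^{(c)}_0}$ is the right general fix.

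Two small tightenings. First, you do not need to merely \emph{interpret} $\Hom_{\mathcal{G}}(V,V')$ as $\vect^{\mathcal{G}}(R,R')$. Take a linear map $V\to V'$ commuting with every $\hat{R}(g)$. Commuting with the idempotents $\hat{R}(\id_x)$, which are the projections onto the summands $R(x)$, forces it to be block diagonal. The remaining commutation relations are then exactly naturality, which is also what Corollary \ref{product} records. Second, the paper's definition of the inner product is circular for one-object groupoids. Reading each summand as the classical group inner product on $\mathcal{G}_x$, as you do, is the only sensible interpretation.
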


\begin{corollary}\label{product}
Let $R,R'$ be representations of $\mathcal{G}$, and let $V$ and $V'$ be the corresponding representation spaces. Then 
\[\Hom_{\mathcal{G}}(V,V')=\vect^\mathcal{G}(R,R')\] 
holds. In particular, \[\dim \Hom_{\mathcal{G}}(V,V')=\dim \vect^\mathcal{G}(R,R')\] holds.\end{corollary}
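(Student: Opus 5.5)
The plan is to build an explicit linear bijection between $\vect^\mathcal{G}(R,R')$ and $\Hom_{\mathcal{G}}(V,V')$, where $V=\bigoplus_{x\in\mathcal{G}_0}R(x)$ and $V'=\bigoplus_{x\in\mathcal{G}_0}R'(x)$. I read $\Hom_{\mathcal{G}}(V,V')$, as in \cite{IR19}, as the space of linear maps $\varphi:V\to V'$ with $\varphi\circ\hat{R}(g)=\hat{R'}(g)\circ\varphi$ for all $g\in\mathcal{G}_1$. I also use the convention implicit in the definition of $\hat{R}$: for $g:x\to y$, the map $\hat{R}(g)$ equals $R(g)$ on the summand $R(x)$ (landing in $R(y)$) and is zero on every other summand $R(z)$ with $z\neq x$. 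Under this bijection, "$=$" in the statement means canonical identification. The dimension equality then follows at once, and combining it with the preceding lemma gives $\dim\vect^\mathcal{G}(R,R')=\langle\chi_R,\chi_{R'}\rangle$.

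\emph{From natural transformations to intertwiners.} Given $\eta:R\Rightarrow R'$ with components $\eta_x:R(x)\to R'(x)$, set $\varphi_\eta:=\bigoplus_x\eta_x$. Fix $g:x\to y$ and a summand $R(z)$.
\begin{itemize}
\item If $z\neq x$, both $\varphi_\eta\hat{R}(g)$ and $\hat{R'}(g)\varphi_\eta$ vanish on $R(z)$, since $\varphi_\eta$ preserves summands.
\item If $z=x$, the required identity is exactly naturality: $\eta_yR(g)=R'(g)\eta_x$.
\end{itemize}
So $\eta\mapsto\varphi_\eta$ is a well-defined map, and it is clearly linear and injective.

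\emph{Surjectivity.} This is the step needing an actual idea. The key observation is that $e_x:=\hat{R}(\id_x)$ is the idempotent projection of $V$ onto the summand $R(x)$. Likewise $e'_x:=\hat{R'}(\id_x)$ is the projection of $V'$ onto $R'(x)$, and $\sum_x e_x=\id_V$. Take an intertwiner $\varphi$. Since $\varphi e_x=e'_x\varphi$, we get $\varphi(R(x))\subseteq R'(x)$, so $\varphi$ splits into components $\eta_x:=\varphi|_{R(x)}:R(x)\to R'(x)$ and $\varphi=\bigoplus_x\eta_x$. Restricting the intertwining identity for $g:x\to y$ to $R(x)$ then gives $\eta_yR(g)=R'(g)\eta_x$. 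Hence $\eta=(\eta_x)_x$ is natural and $\varphi=\varphi_\eta$.

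The only real obstacle is fixing the convention for $\hat{R}(g)$ on the summands other than $R(x)$. If that convention differed, so that $\hat{R}(\id_x)$ were not the projection onto $R(x)$, the summand-preservation argument would have to be replaced. I would therefore state the convention explicitly at the start and then run the argument above.
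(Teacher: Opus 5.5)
Your proposal is correct and takes the same route as the paper. The paper's proof simply asserts that the correspondence $\phi\mapsto\hat{\phi}$ (your $\eta\mapsto\varphi_\eta=\bigoplus_x\eta_x$) is an isomorphism $\vect^\mathcal{G}(R,R')\cong\Hom_\mathcal{G}(V,V')$. Your checks of well-definedness and injectivity, and your surjectivity argument via the idempotents $\hat{R}(\id_x)$, supply the details the paper leaves implicit; the surjectivity step relies on the convention that $\hat{R}(g)$ vanishes off the summand $R(\dom g)$, which you state explicitly.
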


\begin{proof}
Let $\phi:R\to R'$ be a natural transformation, and let $\hat{\phi}:V\to V'$ denote the induced $\mathcal{G}$-linear map. Then the correspondence $\phi\mapsto \hat{\phi}$ yields an isomorphism of vector spaces $\vect^\mathcal{G}(R,R')\cong \Hom_\mathcal{G}(V,V')$.
\end{proof}

\section{An extension of Cauchy-Frobenius lemma to groupoids}
In this section, we prove the Cauchy-Frobenius theorem for groupoids and develop an approach to double cosets using groupoid actions.
\begin{theorem}\label{CF}
Let $\mathcal{G}$ be a connected groupoid and $X$ be a $\mathcal{G}$-set. For $g\in(\Iso\;\mathcal{G})_1$, we set \[\Fix(g):=\{x\in X(\dom g)\mid X(g)(x)=x\}.\] Then
\[|\pi_0{(\mathcal{G}\ltimes X)}|=\frac{1}{|(\Iso\;\mathcal{G})_1|}\sum_{g\in(\Iso\;\mathcal{G})_1}|\Fix(g)|.\]
\end{theorem}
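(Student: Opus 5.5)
We will prove Theorem~\ref{CF} by double counting the pairs formed by an isotropy morphism and a point it fixes, exactly as in the classical Cauchy--Frobenius argument, and then use connectedness of $\mathcal{G}$ through Lemma~\ref{re0} to make the stabilizer/orbit sizes behave uniformly. Write $n:=|\mathcal{G}_0|$ and $N:=|\mathcal{G}_x|$, which by Lemma~\ref{re0} is independent of $x$. Then $|(\Iso\;\mathcal{G})_1|=nN$, and the objects of $\mathcal{G}\ltimes X$ are the elements $\langle G,x\rangle$ with $x\in X(G)$.

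First consider the set
\[S:=\{(g,\langle G,x\rangle)\mid g\in\mathcal{G}_G,\ x\in X(G),\ X(g)(x)=x\},\]
which makes sense because an element of $(\Iso\;\mathcal{G})_1$ has equal domain and codomain. Counting by $g$ gives $|S|=\sum_{g\in(\Iso\;\mathcal{G})_1}|\Fix(g)|$. Counting by $\langle G,x\rangle$ gives $|S|=\sum_{\langle G,x\rangle}|\Stab(x)|$.

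The key step is to compute $\sum|\Stab(x)|$ over a single connected component $\mathcal{O}$ of $\mathcal{G}\ltimes X$ and show that it equals $nN$. Since $\mathcal{O}$ is a connected groupoid, Lemma~\ref{re0} applied to $\mathcal{O}$ shows that all stabilizers in $\mathcal{O}$ have the same order $s$. Hence the sum equals $|\mathcal{O}_0|\cdot s=|\mathcal{O}_1|/|\mathcal{O}_0|$, because $|\mathcal{O}_1|=|\mathcal{O}_0|^2 s$.

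To count the morphisms of $\mathcal{O}$, fix $\langle G,x\rangle\in\mathcal{O}_0$. Every morphism of $\mathcal{G}$ with domain $G$, that is every $g\in\Mor_\mathcal{G}(G,-)$, gives a unique morphism $\langle G,x\rangle\to\langle \cod g, X(g)(x)\rangle$ in $\mathcal{O}$. Thus the out-star of $\langle G,x\rangle$ in $\mathcal{O}$ has exactly $|\Mor_\mathcal{G}(G,-)|=nN$ elements, by Lemma~\ref{re0} and the connectedness of $\mathcal{G}$. On the other hand, in the connected groupoid $\mathcal{O}$ this out-star has $|\mathcal{O}_0|\cdot s$ elements. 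Hence $|\mathcal{O}_0|s=nN$, so each orbit contributes exactly $nN$ to the sum.

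Summing over the components gives $|S|=nN\cdot|\pi_0(\mathcal{G}\ltimes X)|$, and comparing with the first count yields the formula. The main point needing care is the identification of the out-star in $\mathcal{O}$ with $\Mor_\mathcal{G}(G,-)$. This is the groupoid analogue of the orbit--stabilizer theorem. It is also where connectedness of $\mathcal{G}$ is essential: it makes the out-star size $nN$ independent of $G$. Without connectedness the normalization by $|(\Iso\;\mathcal{G})_1|$ would fail.
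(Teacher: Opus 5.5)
Your proof is correct. Its skeleton matches the paper's: both double count the pairs $(g,x)$ with $g\in(\Iso\;\mathcal{G})_1$ fixing $x$. The difference is in how the stabilizer sum is evaluated.

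The paper splits each component $(\mathcal{G}\ltimes X)_{[i]}$ further by the objects $G\in\mathcal{G}_0$. It then applies the classical orbit--stabilizer theorem to the group $\mathcal{G}_G$ acting on $X(G)$. Each slice $(\mathcal{G}\ltimes X)_{[i]}\cap X(G)$ is a single $\mathcal{G}_G$-orbit, so it contributes exactly $|\mathcal{G}_G|$, and summing over $G$ gives $|(\Iso\;\mathcal{G})_1|$ per component.

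You instead keep the component $\mathcal{O}$ whole. You use Lemma~\ref{re0} on the connected groupoid $\mathcal{O}$ to make all stabilizers the same size $s$. You then count the morphisms out of one object in two ways, which amounts to an orbit--stabilizer theorem stated directly for groupoids.

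The paper's route reduces everything to the familiar group statement and gives the finer identity object by object. However, it leaves two facts implicit: that each slice is one orbit, and that each slice is nonempty. Nonemptiness is exactly where connectedness of $\mathcal{G}$ is used. Your route stays inside groupoid language and needs only Lemma~\ref{re0} plus an explicit bijection between $\Mor_\mathcal{G}(G,-)$ and the morphisms of $\mathcal{O}$ out of $\langle G,x\rangle$. It also pinpoints the role of connectedness as the fact that $|\Mor_\mathcal{G}(G,-)|=|(\Iso\;\mathcal{G})_1|$ does not depend on $G$.
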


\begin{proof}
Let $X$ be a $\mathcal{G}$-set and let 
\[D:=\{(g,x)\in (\Iso\;\mathcal{G})_1\times X(\dom g)\mid X(g)(x)=x\}.\] Suppose that $|\pi_0{(\mathcal{G}\ltimes X)}|=n$. Let $\mathcal{G}\ltimes X \cong\coprod^n_{i=1}(\mathcal{G}\ltimes X)_{[i]}$ be the decomposition of $\mathcal{G}\ltimes X$ into its connected components. We compute $|D|$ in two different ways. \\$(1)$ \[|D|=\smashoperator[r]{\sum_{g\in (\Iso\;\mathcal{G})_1}}|\Fix(g)|.\]
$(2)$ \begin{center}
$\begin{aligned}|D|&=\sum_{G\in\mathcal{G}_0}\smashoperator[r]{\sum_{x\in X(G)}}|\Stab(x)|\\&=\sum_{G\in\mathcal{G}_0}\sum_{i=1}^n\smashoperator[r]{\sum_{x\in (\mathcal{G}\ltimes X)_{[i]}\cap X(G)}}|\Stab(x)|& &\\
&\overset{(*)}{=}\sum_{G\in\mathcal{G}_0}\sum_{i=1}^n|\mathcal{G}_G|\\
&=|(\Iso\;\mathcal{G})_1|n.& &\label{B}
\end{aligned}$\end{center}

\noindent
In $(*)$, we used the Orbit-Stabilizer theorem.
\noindent
By comparing (1) and (2), we obtain
\[|\pi_0{(\mathcal{G}\ltimes X)}|=\frac{1}{|(\Iso\;\mathcal{G})_1|}\sum_{g\in(\Iso\;\mathcal{G})_1}|\Fix(g)|.\]
\end{proof}

\begin{definition}
If $\mathcal{H}$ and $\mathcal{K}$ are subgroupoids of $\mathcal{G}$, then the functor $X_{\mathcal{H},\mathcal{K}}:\mathcal{H}\times\mathcal{K}\to \set$ is defined by the following conditions: 
\begin{enumerate}
\item For an object $(s,t)$, define $X_{\mathcal{H},\mathcal{K}}((s,t)):=\mathcal{G}(t,s)$.
\item For a morphism $(h,k):(s,t)\to (s',t')$, define $X_{\mathcal{H},\mathcal{K}}((h,k)):\mathcal{G}(t,s)\to
\mathcal{G}(t',s')$ by $g\mapsto hgk^{-1}$.
\end{enumerate}
\end{definition}

\begin{proposition}
If $\mathcal{H}$ and $\mathcal{K}$ are subgroupoids of $\mathcal{G}$, then \[\mathcal{G}\ltimes X_{\mathcal{H},\mathcal{K}}\cong\incl^\mathcal{G}_\mathcal{K}\downarrow \incl^\mathcal{G}_\mathcal{H}.\]
\end{proposition}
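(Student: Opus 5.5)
We prove the isomorphism by writing down both groupoids explicitly and matching them. The action groupoid here is $(\mathcal{H}\times\mathcal{K})\ltimes X_{\mathcal{H},\mathcal{K}}$; the statement writes $\mathcal{G}\ltimes X_{\mathcal{H},\mathcal{K}}$, and we read it this way.

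\textbf{The action groupoid.} Its objects are pairs $\langle (s,t), g\rangle$ with $s\in\mathcal{H}_0$, $t\in\mathcal{K}_0$ and $g\in\mathcal{G}(t,s)$. A morphism $\langle (s,t),g\rangle\to\langle (s',t'),g'\rangle$ is a pair $(h,k)\in\mathcal{H}(s,s')\times\mathcal{K}(t,t')$ with $hgk^{-1}=g'$, that is, $hg=g'k$. Composition is componentwise in $\mathcal{H}\times\mathcal{K}$.

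\textbf{The comma category.} Objects of $\incl^\mathcal{G}_\mathcal{K}\downarrow \incl^\mathcal{G}_\mathcal{H}$ are triples $(t,s,g)$ with $t\in\mathcal{K}_0$, $s\in\mathcal{H}_0$ and $g:\incl(t)\to\incl(s)$ in $\mathcal{G}$, i.e.\ $g\in\mathcal{G}(t,s)$. A morphism $(t,s,g)\to(t',s',g')$ is a pair $(k,h)\in\mathcal{K}(t,t')\times\mathcal{H}(s,s')$ making the square commute: $g'k=hg$. Composition is again componentwise.

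\textbf{The functors.} Define $\Phi$ from the action groupoid to the comma category by
\[
\langle (s,t),g\rangle\mapsto (t,s,g),\qquad (h,k)\mapsto (k,h).
\]
The defining conditions $hgk^{-1}=g'$ and $g'k=hg$ are equivalent, since $k$ is invertible. So $\Phi$ is well defined on morphisms, and it is bijective on each hom-set. Identities and composition are preserved because both categories compose componentwise in $\mathcal{H}$ and $\mathcal{K}$. The assignment $(t,s,g)\mapsto\langle (s,t),g\rangle$, $(k,h)\mapsto(h,k)$ is an inverse functor on the nose, so $\Phi$ is an isomorphism of categories, as the convention for $\cong$ requires.

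\textbf{Expected obstacle.} There is essentially none; the argument is bookkeeping. The only care needed is in the orientation conventions: the comma category $F\downarrow G$ with $F=\incl^\mathcal{G}_\mathcal{K}$ has arrows $F(t)\to G(s)$, which matches $X_{\mathcal{H},\mathcal{K}}((s,t))=\mathcal{G}(t,s)$. The factor order $(h,k)$ versus $(k,h)$ must be swapped consistently.
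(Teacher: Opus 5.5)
Your proof is correct. Reading $\mathcal{G}\ltimes X_{\mathcal{H},\mathcal{K}}$ as $(\mathcal{H}\times\mathcal{K})\ltimes X_{\mathcal{H},\mathcal{K}}$ is the only reading that type-checks, since $X_{\mathcal{H},\mathcal{K}}$ is a functor on $\mathcal{H}\times\mathcal{K}$, and it matches the paper's introduction.

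The paper's route is different. It never writes down a functor. Its one-line justification is that the connected component of $\incl^\mathcal{G}_\mathcal{K}\downarrow\incl^\mathcal{G}_\mathcal{H}$ containing $g$ can be identified with $\mathcal{H}g\mathcal{K}$. So it matches the two groupoids only at the level of objects and components (orbits), which is the aspect the paper uses later when double cosets are treated as orbits and counted. On its own, that identification says nothing about hom-sets. It is really a consequence of the isomorphism rather than a proof of it.

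Your argument supplies what the paper leaves implicit:
\begin{itemize}
\item the object bijection $\langle (s,t),g\rangle\leftrightarrow (t,s,g)$;
\item the observation that the morphism conditions $hgk^{-1}=g'$ and $g'k=hg$ coincide;
\item componentwise composition on both sides.
\end{itemize}
Together these give a strict isomorphism of categories, as the paper's convention for $\cong$ demands.

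If you want to recover the paper's remark as well, add one line. An object $\langle(s,t),g\rangle$ is determined by $g$, since $s=\cod g$ and $t=\dom g$. Hence the component of $\langle(\cod g,\dom g),g\rangle$ has object set in bijection with $\{hgk^{-1}\}=\mathcal{H}g\mathcal{K}$, and under your $\Phi$ this transports to the comma category.
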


\begin{proof}
This follows from the fact that the connected component of 
$\incl^\mathcal{G}_\mathcal{K}\downarrow \incl^\mathcal{G}_\mathcal{H}$ containing $g$  can be identified with $\mathcal{H}g\mathcal{K}$.
\end{proof}

The idea of relating double cosets to comma categories was inspired by \cite{BD20}.

\begin{theorem}
Let $\mathcal{H}$ and $\mathcal{K}$ be wide subgroupoids of a connected groupoid $\mathcal{G}$.
Let $\mathcal{H}\cong\coprod_{\lambda} \mathcal{H}_{\lambda}$ and $\mathcal{K}\cong\coprod_{\mu} \mathcal{K}_{\mu}$ be their decompositions into connected components.
For $g\in\mathcal{G}_1$, let $\mathcal{H}_\lambda$ and $\mathcal{K}_\mu$ be the connected components containing  $\cod g$ and $\dom g$, respectively. Then,
\[|\mathcal{H}g\mathcal{K}|=\frac{|(\mathcal{H}_\lambda)_0||(\mathcal{K}_\mu)_0||\mathcal{H}_{\cod g}||\mathcal{K}_{\dom g}|}{|\mathcal{H}_{\cod g}\cap{}^g\mathcal{K}_{\dom g}|}.\]
holds.
\end{theorem}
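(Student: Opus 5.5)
The plan is to apply the orbit--stabilizer theorem to the groupoid action $X_{\mathcal{H},\mathcal{K}}$ of $\mathcal{H}\times\mathcal{K}$. By the preceding Proposition, $\mathcal{H}g\mathcal{K}$ is exactly the object set of the connected component of $(\mathcal{H}\times\mathcal{K})\ltimes X_{\mathcal{H},\mathcal{K}}$ containing $\langle(\cod g,\dom g),g\rangle$. So $|\mathcal{H}g\mathcal{K}|$ equals the number of objects of that component, call it $\mathcal{C}$.

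\textbf{Step 1: project the component.} Consider the projection functor $p:\mathcal{C}\to\mathcal{H}\times\mathcal{K}$ with $\langle(s,t),g'\rangle\mapsto (s,t)$. Its image lies in the connected component $\mathcal{H}_\lambda\times\mathcal{K}_\mu$, whose object set has cardinality $|(\mathcal{H}_\lambda)_0||(\mathcal{K}_\mu)_0|$. Conversely, every object $(s,t)$ of $\mathcal{H}_\lambda\times\mathcal{K}_\mu$ is hit, because we may choose $h:\cod g\to s$ in $\mathcal{H}$ and $k:\dom g\to t$ in $\mathcal{K}$ and then $hgk^{-1}\in\mathcal{C}$. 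This gives
\[|\mathcal{C}_0|=\sum_{(s,t)}|p^{-1}(s,t)|.\]

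\textbf{Step 2: show all fibres have equal size.} Fix $(s,t)$ and the morphism $(h,k)$ chosen above. Then $X_{\mathcal{H},\mathcal{K}}((h,k))$ is a bijection $\mathcal{G}(\dom g,\cod g)\to\mathcal{G}(t,s)$. It maps the fibre over $(\cod g,\dom g)$, namely the orbit of $g$ under the isotropy group $\mathcal{H}_{\cod g}\times\mathcal{K}_{\dom g}$, bijectively onto the fibre over $(s,t)$. For this, note that any element $h'gk'^{-1}$ of $\mathcal{G}(t,s)$ can be rewritten as $h\,(h^{-1}h')\,g\,(k'^{-1}k)\,k^{-1}$, where $h^{-1}h'\in\mathcal{H}_{\cod g}$ and $k^{-1}k'\in\mathcal{K}_{\dom g}$. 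Hence every fibre has the size of the fibre over $(\cod g,\dom g)$.

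\textbf{Step 3: apply the group orbit--stabilizer theorem to the base fibre.} That fibre is the orbit of $g$ under the group $\mathcal{H}_{\cod g}\times\mathcal{K}_{\dom g}$ acting by $g\mapsto hgk^{-1}$. Its stabilizer is
\[\{(h,k)\mid hgk^{-1}=g\}=\{(h,g^{-1}hg)\mid h\in\mathcal{H}_{\cod g},\ g^{-1}hg\in\mathcal{K}_{\dom g}\},\]
which is isomorphic to $\mathcal{H}_{\cod g}\cap{}^g\mathcal{K}_{\dom g}$, where ${}^g\mathcal{K}_{\dom g}=g\mathcal{K}_{\dom g}g^{-1}\le\mathcal{G}_{\cod g}$. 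The orbit therefore has size
\[\frac{|\mathcal{H}_{\cod g}||\mathcal{K}_{\dom g}|}{|\mathcal{H}_{\cod g}\cap{}^g\mathcal{K}_{\dom g}|}.\]
Multiplying by the number of fibres from Step 1 yields the formula.

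The main obstacle is Step 2, namely checking carefully that the fibre of $\mathcal{C}$ over $(s,t)$ is exactly the translate of the base fibre. Elements of $\mathcal{H}g\mathcal{K}$ arise from arbitrary composable $h',k'$, and these must be rewritten through the fixed $h,k$ as above. This is essentially Lemma \ref{re0} applied to $\mathcal{H}_\lambda$ and $\mathcal{K}_\mu$. Keeping the convention $hgk^{-1}$ versus $hgk$ consistent with the definition of $\mathcal{H}g\mathcal{K}$ also needs care; since $k$ ranges over a groupoid, replacing $k$ by $k^{-1}$ does not change the set.
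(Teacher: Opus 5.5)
Your proof is correct, but it is organized differently from the paper's. The paper's proof never uses the action groupoid. It counts composable pairs directly: Lemma \ref{re0} gives $|\Mor_\mathcal{H}(\cod g,-)|\,|\Mor_\mathcal{K}(-,\dom g)|=|(\mathcal{H}_\lambda)_0||(\mathcal{K}_\mu)_0||\mathcal{H}_{\cod g}||\mathcal{K}_{\dom g}|$. It then observes that $h_1gk_1=h_2gk_2$ forces $h_2^{-1}h_1=gk_2k_1^{-1}g^{-1}\in\mathcal{H}_{\cod g}\cap{}^g\mathcal{K}_{\dom g}$, and divides the total by the size of that intersection. In effect this is one global orbit--stabilizer count for the surjection $(h,k)\mapsto hgk$.

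You instead fibre the orbit over the objects of $\mathcal{H}_\lambda\times\mathcal{K}_\mu$ and transport every fibre back to the one over $(\cod g,\dom g)$. You then apply the classical orbit--stabilizer theorem to the group $\mathcal{H}_{\cod g}\times\mathcal{K}_{\dom g}$. So the factor $|(\mathcal{H}_\lambda)_0||(\mathcal{K}_\mu)_0|$ appears as a number of fibres, not as a count of hom-sets.

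The paper's route is shorter. However, it leaves implicit that each $c$ in the intersection really yields a second pair $(h_1c^{-1},\,g^{-1}cg\,k_1)$ in the same fibre. That converse is needed for the fibres to have exactly, not merely at most, that size. Your route gets both directions from the group theorem. It also exhibits the structural fact behind the formula: $\mathcal{H}g\mathcal{K}$ is a disjoint union of $|(\mathcal{H}_\lambda)_0||(\mathcal{K}_\mu)_0|$ translates $h\,(\mathcal{H}_{\cod g}\,g\,\mathcal{K}_{\dom g})\,k^{-1}$ of an ordinary group double coset, one for each object $(s,t)$, with fixed $h:\cod g\to s$ and $k:\dom g\to t$.
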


\begin{proof}
It suffices to count the number of $\Mor_\mathcal{H}(\cod g,-)$ and $\Mor_\mathcal{K}(-,\dom g)$. Using Lemma \ref{re0}, these can be computed as follows:  
\[|\Mor_\mathcal{H}(\cod g,-)|=|(\mathcal{H}_\lambda)_0||\mathcal{H}_{\cod g}|,\]
\[|\Mor_\mathcal{K}(-,\dom g)|=|(\mathcal{K}_\mu)_0||\mathcal{K}_{\dom g}|.\]

Suppose that $h_1gk_1=h_2gk_2$ holds. Then, we have $h_2^{-1}h_1=gk_2k_1^{-1}g^{-1}$, where the left-hand side belongs to $\mathcal{H}_{\cod g}$ and the right-side belongs to ${}^g\mathcal{K}_{\dom g}$. Therefore, by dividing the total number of pairs $|(\mathcal{H}_\lambda)_0||(\mathcal{K}_\mu)_0||\mathcal{H}_{\cod g}||\mathcal{K}_{\dom g}|$ by $|\mathcal{H}_{\cod g}\cap {}^g\mathcal{K}_{\dom g}|$, we obtain 
\[|\mathcal{H}g\mathcal{K}|=\frac{|(\mathcal{H}_\lambda)_0||(\mathcal{K}_\mu)_0||\mathcal{H}_{\cod g}||\mathcal{K}_{\dom g}|}{|\mathcal{H}_{\cod g}\cap{}^g\mathcal{K}_{\dom g}|}.\]
\end{proof}

\section{Induced representations}
In this section, we prove basic properties the induction of groupoid actions and the induction of linear representations. 
\begin{lemma}
Let $\triangle:\set\to\set^\mathcal{H}$ be the diagonal functor and $1$ be a terminal object of $\set$. Then \[\mathcal{G}/\mathcal{H}\cong \Ind^\mathcal{G}_\mathcal{H} \triangle 1.\]
\end{lemma}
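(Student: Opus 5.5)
I will prove $\mathcal{G}/\mathcal{H}\cong\Ind^\mathcal{G}_\mathcal{H}\triangle 1$ by computing the left Kan extension $(\incl^\mathcal{G}_\mathcal{H})^\dagger(\triangle 1)$ with the pointwise colimit formula. This gives an explicit $\mathcal{G}$-set, and I will then match it with $\mathcal{G}/\mathcal{H}$. Because $\set$ is cocomplete for the finite colimits involved, for each $x\in\mathcal{G}_0$ we have
\[
(\Ind^\mathcal{G}_\mathcal{H}\triangle 1)(x)=\colim\Bigl(\incl^\mathcal{G}_\mathcal{H}\downarrow x\longrightarrow \mathcal{H}\xrightarrow{\triangle 1}\set\Bigr).
\]
Here $\incl^\mathcal{G}_\mathcal{H}\downarrow x$ is the comma category whose objects are pairs $(y,a)$ with $y\in\mathcal{H}_0=\mathcal{G}_0$ and $a\in\mathcal{G}(y,x)$. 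Its morphisms $(y,a)\to(y',a')$ are the $h\in\mathcal{H}(y,y')$ with $a'h=a$. The colimit of a constant one-point diagram is the set $\pi_0(\incl^\mathcal{G}_\mathcal{H}\downarrow x)$ of connected components.

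First I will identify this set of components. The objects of the comma category are exactly the elements of $\Mor_\mathcal{G}(-,x)$. Two objects $a,a'$ are joined by a morphism iff $h:=a'^{-1}a$ lies in $\mathcal{H}_1$. Since $\mathcal{H}$ is a subgroupoid, this relation is already an equivalence relation, and it is exactly $\sim_{\mathcal{H},x}$. Hence $\pi_0(\incl^\mathcal{G}_\mathcal{H}\downarrow x)=\Mor_\mathcal{G}(-,x)/{\sim_{\mathcal{H},x}}=\mathcal{G}/\mathcal{H}(x)$.

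Next I will check the functoriality. In the Kan extension, a morphism $g:x\to z$ acts by the functor $\incl\downarrow x\to\incl\downarrow z$, $(y,a)\mapsto(y,ga)$. On components this is $a\mathcal{H}\mapsto ga\mathcal{H}$, which is the definition of $\mathcal{G}/\mathcal{H}(g)$. So the bijections above are natural in $x$ and assemble into an isomorphism of $\mathcal{G}$-sets.

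The step I expect to need the most care is justifying that $\Ind$, defined as the left adjoint $F^\dagger$, is given by the pointwise colimit formula, and pinning down the variance conventions. The paper states that $F^\dagger$ is constructed via Kan extensions, so I will take the standard pointwise formula as given. Alternatively, one can verify the adjunction directly. A natural transformation $\mathcal{G}/\mathcal{H}\to Y$ is determined by the images $\varphi_x(\id_x\mathcal{H})$. Naturality together with the relation $h\mathcal{H}=\id\mathcal{H}$ for $h\in\mathcal{H}_1$ forces these to form an element of $\set^\mathcal{H}(\triangle 1,\res Y)$, that is, an $\mathcal{H}$-invariant family of points. This gives $\set^\mathcal{G}(\mathcal{G}/\mathcal{H},Y)\cong\set^\mathcal{H}(\triangle 1,Y\circ\incl^\mathcal{G}_\mathcal{H})$ naturally in $Y$, and uniqueness of adjoints then yields the claim.
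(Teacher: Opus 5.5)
Your proposal is correct and follows essentially the same route as the paper: the pointwise left Kan extension formula, where the colimit of the constant one-point diagram over $\incl^\mathcal{G}_\mathcal{H}\downarrow x$ is identified with $\Mor_\mathcal{G}(-,x)/{\sim_{\mathcal{H},x}}$ because a morphism $(y,a)\to(y',a')$ exists exactly when $a'^{-1}a\in\mathcal{H}_1$. You also check naturality in $x$, which the paper leaves implicit, and your alternative Yoneda-style verification of the adjunction is valid as well.
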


\begin{proof}
Since we have a diagram 
\[\begin{tikzcd}
1 \arrow[r,"G"] & \mathcal{G} \arrow[dr, "\Ind^\mathcal{G}_\mathcal{H}\triangle 1"{name=U}] &{} &{}\\
\incl^\mathcal{G}_\mathcal{H}\downarrow G \arrow[u,"Q"] \arrow[r,"P"'] & \mathcal{H}  \arrow[ul,,shorten >=15pt,,shorten <=15pt,"\kappa",Rightarrow]\arrow[u, "\incl^\mathcal{G}_\mathcal{H}"]   \arrow[r, "\triangle 1"'{name=V}] & \set 
\ar[Rightarrow,dashed,shorten >=15pt,xshift=-15pt,yshift=5pt,"\tau"',from=V,to=U]
\end{tikzcd}
\]
by the pointwise Kan extension, we obtain
\[\begin{aligned}\Ind^\mathcal{G}_\mathcal{H}\triangle 1(G)&\cong\colim(\incl^\mathcal{G}_\mathcal{H}\downarrow G\xrightarrow{P}\mathcal{H}\xrightarrow{\triangle 1}\set)\\
&\cong\coprod_{l\in(\incl^\mathcal{G}_\mathcal{H}\,\downarrow\,G)_0}(\triangle 1\circ P)(l)/R \end{aligned},\]
where $R$ is the binary relation defined as follows:
\[\langle l,x\rangle\;R\;\langle k,y\rangle \iff h:\dom l\to \dom k \;\text{such that}\;(\triangle 1\circ P)(h)(x)=y\;\text{holds}.\]
Since $\incl^{\mathcal{G}}_{\mathcal{H}}\downarrow G$ is a groupoid, $R$ becomes an equivalence relation.
Let $l:H\to G$, $k:H'\to G$ and $h:H\to H'$. By commutativity, the equality $l=kh$ holds. From this, we see that $l$ and $k$ belong to the same left congruence.
Since $(\triangle 1\circ P)(h)=\triangle 1(h)=\id_1$, the equality $(\triangle 1\circ P)(h)(x)=y$ implies $x=y$.
By combining the results, $\Ind^\mathcal{G}_\mathcal{H}\triangle1(G)$ can be identified with the disjoint union of copies of $\{\bullet\}$ indexed by a complete set of the representatives $\mathcal{G}/\mathcal{H}(G)$.
\end{proof}

\begin{theorem}\label{ind}
Let $\mathcal{H}$ be a connected subgroupoid of $\mathcal{G}$. Then $\mathbb{C}[\mathcal{G}/\mathcal{H}]\cong \Ind^\mathcal{G}_\mathcal{H}\Tri$.  
\end{theorem}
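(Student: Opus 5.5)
The idea is to reduce the statement to the preceding Lemma, $\mathcal{G}/\mathcal{H}\cong \Ind^\mathcal{G}_\mathcal{H}\triangle 1$, by applying the linearization functor $F:\set\to\vect$ of Lemma \ref{re1}. Concretely, $\mathbb{C}[\mathcal{G}/\mathcal{H}]=F\circ(\mathcal{G}/\mathcal{H})$ by definition of the permutation representation, so the Lemma gives
\[\mathbb{C}[\mathcal{G}/\mathcal{H}]\cong F\circ \Ind^\mathcal{G}_\mathcal{H}\triangle 1 .\]
Lemma \ref{re1} gives $\Tri\cong F\circ\triangle 1$ as functors $\mathcal{H}\to\vect$, hence $\Ind^\mathcal{G}_\mathcal{H}\Tri\cong \Ind^\mathcal{G}_\mathcal{H}(F\circ\triangle 1)$. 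What remains is to show that postcomposition with $F$ commutes with the left Kan extension along $\incl^\mathcal{G}_\mathcal{H}$, i.e.
\[F\circ \Ind^\mathcal{G}_\mathcal{H}(S)\cong \Ind^\mathcal{G}_\mathcal{H}(F\circ S)\qquad\text{for } S:\mathcal{H}\to\set .\]

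For this step I would use the pointwise formula from the proof of the previous Lemma. For each $G\in\mathcal{G}_0$,
\[\Ind^\mathcal{G}_\mathcal{H}(F\circ S)(G)\cong \colim\bigl(\incl^\mathcal{G}_\mathcal{H}\downarrow G\xrightarrow{P}\mathcal{H}\xrightarrow{F\circ S}\vect\bigr),\]
and I would show that $F$ preserves these colimits. Since $F$ is left adjoint to the forgetful functor $\vect\to\set$, it preserves all colimits that exist in $\set$. The only care needed is that we work with \emph{finite} sets and finite-dimensional spaces. However, $\incl^\mathcal{G}_\mathcal{H}\downarrow G$ is a finite groupoid, so the colimit is a finite coproduct of quotients by finite group actions. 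These exist in $\set$, and $F$ sends them to the corresponding finite direct sums of coinvariants, which are the colimits in $\vect$. This yields isomorphisms $F(\Ind^\mathcal{G}_\mathcal{H}S(G))\cong \Ind^\mathcal{G}_\mathcal{H}(F\circ S)(G)$.

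Alternatively, the isomorphism can be proved by an adjunction chase. For $W:\mathcal{G}\to\vect$,
\[\vect^\mathcal{G}\bigl(F\circ\Ind^\mathcal{G}_\mathcal{H} S,W\bigr)\cong \set^\mathcal{G}\bigl(\Ind^\mathcal{G}_\mathcal{H} S,UW\bigr)\cong\set^\mathcal{H}\bigl(S,UW\circ\incl^\mathcal{G}_\mathcal{H}\bigr)\cong\vect^\mathcal{H}\bigl(F\circ S,W\circ \incl^\mathcal{G}_\mathcal{H}\bigr),\]
where $U$ is the forgetful functor. The first and last isomorphisms come from $F\dashv U$ applied objectwise, and the middle one from $\Ind^\mathcal{G}_\mathcal{H}\dashv \res$ on the $\set$ level. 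Yoneda then gives $F\circ \Ind^\mathcal{G}_\mathcal{H} S\cong \Ind^\mathcal{G}_\mathcal{H}(F\circ S)$.

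The main obstacle is naturality in $G$ and the finiteness bookkeeping. Pointwise isomorphisms must assemble into a natural isomorphism of functors $\mathcal{G}\to\vect$, and $U$ lands in all sets while $\set$ consists of finite sets. The adjunction route handles naturality automatically, so I would prefer it, noting that finiteness of $\mathcal{G}$ keeps all objects finite. The connectedness hypothesis on $\mathcal{H}$ is not really needed for this argument. I would only remark that it matches the hypotheses of the previous Lemma as used here. Chaining the isomorphisms gives $\mathbb{C}[\mathcal{G}/\mathcal{H}]\cong\Ind^\mathcal{G}_\mathcal{H}\Tri$.
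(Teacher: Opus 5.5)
Your proposal is correct and is essentially the paper's proof: identify $\mathbb{C}[\mathcal{G}/\mathcal{H}]$ with $F\circ\Ind^\mathcal{G}_\mathcal{H}\triangle 1$ by the preceding Lemma, replace $\Tri$ by $F\circ\triangle 1$ via Lemma \ref{re1}, and move $F$ past the left Kan extension. You prove that last step for an arbitrary $S$, which is more than the paper needs: it only uses the constant diagram $\triangle 1$, whose colimits over the comma groupoids are coproducts indexed by $\pi_0$, so preservation of finite coproducts suffices. One small correction: in your adjunction chase $UW$ is not finite-valued, so that argument must be run in all sets, not resolved by the finiteness of $\mathcal{G}$ as you suggest; your pointwise argument avoids the issue.
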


\begin{proof}
By Lemma \ref{re1}, we have an isomorphism 
\[\Ind^\mathcal{G}_\mathcal{H}\Tri\cong\Ind^\mathcal{G}_\mathcal{H}(F\circ\triangle 1).\]
From the definition of induction and Lemma \ref{re1}, we obtain the following diagram.
\[\begin{tikzcd}
\mathcal{G} \arrow[dr, "\Ind^\mathcal{G}_\mathcal{H}\ \triangle1"{name=U}]\arrow[drr,bend left=30, "\Ind^\mathcal{G}_\mathcal{H}\ \Tri"] &{} &{}\\
\mathcal{H}\arrow[u, "\incl^\mathcal{G}_\mathcal{H}"]   \arrow[r, "\triangle 1"'{name=V}] & \set 
\ar[Rightarrow,dashed,shorten >=20pt,xshift=-15pt,yshift=5pt,"\tau"',from=V,to=U]
\arrow[r, "F"'] 
&\vect
\end{tikzcd}
\]
Since $F$ preserves finite coproducts, it commutes with Kan extensions. Therefore it follows that $\mathbb{C}[\mathcal{G}/\mathcal{H}]\cong \Ind^\mathcal{G}_\mathcal{H}\Tri$.
\end{proof}

\section{Groupoid action on functions}
In this section, we develop an approach to double cosets using linear representations of groupoids.
\begin{theorem}\label{dim}
Let \[\mathbbm{C}_{\mathcal{H}\backslash\mathcal{G}/\mathcal{K}}:=\{\phi:\mathcal{G}_1\to\mathbb{C}\mid \phi(h^{-1}gk)=\phi(g)\,,\,\forall (h,g,k)\in {\mathcal{H}_1}\,_{\cod}\kern-0.1em\times_{\cod}\,{\mathcal{G}_1} {}_{\dom}\kern-0.1em\times_{\cod}\,\mathcal{K}_1\}.\]  Then 
\[\dim \mathbbm{C}_{\mathcal{H}\backslash\mathcal{G}/\mathcal{K}}=|\mathcal{H}\backslash\mathcal{G}/\mathcal{K}|.\]
\end{theorem}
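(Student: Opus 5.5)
The plan is to show that $\mathbbm{C}_{\mathcal{H}\backslash\mathcal{G}/\mathcal{K}}$ is exactly the space of functions on $\mathcal{G}_1$ that are constant on each double coset. The characteristic functions of the double cosets will then form a basis. This mirrors the group case.

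First I check that the double cosets $\mathcal{H}g\mathcal{K}$ partition $\mathcal{G}_1$. Every $g$ lies in $\mathcal{H}g\mathcal{K}$, since $\mathcal{H}$ and $\mathcal{K}$ are subcategories and so contain the identities $\id_{\cod g}$ and $\id_{\dom g}$. This uses that the relevant objects lie in $\mathcal{H}_0$ and $\mathcal{K}_0$; if the subgroupoids are not wide, I restrict attention to those $g$ where the cosets are defined, as in the preceding section where $\mathcal{H},\mathcal{K}$ are wide. Next I show that $g'\in\mathcal{H}g\mathcal{K}$ defines an equivalence relation. Reflexivity is the identity case just noted. Symmetry holds because $g'=hgk$ gives $g=h^{-1}g'k^{-1}$, with $h^{-1}\in\mathcal{H}_1$ and $k^{-1}\in\mathcal{K}_1$ by closure under inverses. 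Transitivity holds because $h'(hgk)k'=(h'h)g(kk')$ and $\mathcal{H},\mathcal{K}$ are closed under composition, with composability guaranteed by the domain and codomain conditions. Hence $\mathcal{H}\backslash\mathcal{G}/\mathcal{K}$ is the set of equivalence classes. Alternatively, I can cite the identification of double cosets with the connected components of $\mathcal{G}\ltimes X_{\mathcal{H},\mathcal{K}}$ from the previous section, since components of a groupoid partition its objects, and here the objects are essentially $\mathcal{G}_1$.

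Second, I show that $\phi\in\mathbbm{C}_{\mathcal{H}\backslash\mathcal{G}/\mathcal{K}}$ if and only if $\phi$ is constant on each double coset. Suppose the invariance $\phi(h^{-1}gk)=\phi(g)$ holds for all composable triples. Since $h$ ranges over all of $\mathcal{H}_1$ with $\cod h=\cod g$, the elements $h^{-1}$ range over all morphisms of $\mathcal{H}$ with domain $\cod g$. So the condition says exactly $\phi(h'gk)=\phi(g)$ for all $h'gk\in\mathcal{H}g\mathcal{K}$, and the converse direction is immediate. The main point of care is matching the fiber product conditions in the definition ($\cod h=\cod g$ and $\dom g=\cod k$) with the composability conditions in the definition of $\mathcal{H}g\mathcal{K}$. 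I expect this bookkeeping to be the only real obstacle.

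Finally, let $\delta_C$ denote the indicator function of a double coset $C$. Each $\delta_C$ lies in $\mathbbm{C}_{\mathcal{H}\backslash\mathcal{G}/\mathcal{K}}$. The family $\{\delta_C\}$ is linearly independent because the cosets are disjoint and nonempty. It spans, since any invariant $\phi$ equals $\sum_C \phi(g_C)\delta_C$ for any choice of representatives $g_C\in C$. Since $\mathcal{G}$ is finite, this gives $\dim\mathbbm{C}_{\mathcal{H}\backslash\mathcal{G}/\mathcal{K}}=|\mathcal{H}\backslash\mathcal{G}/\mathcal{K}|$.
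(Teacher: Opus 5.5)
Your proposal is correct and follows the same route as the paper: identify $\mathbbm{C}_{\mathcal{H}\backslash\mathcal{G}/\mathcal{K}}$ with the space of functions constant on each double coset, then count a basis. You also supply two details the paper leaves implicit: that the double cosets partition $\mathcal{G}_1$, which genuinely requires $\mathcal{H}$ and $\mathcal{K}$ to be wide as you rightly flag, and that the indicator functions $\delta_C$ form a basis.
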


\begin{proof}
Let $x=hyk$ and $\phi\in \mathbbm{C}_{\mathcal{H}\backslash\mathcal{G}/\mathcal{K}}$. Then
\[\begin{aligned}
\phi(x)&=\phi(hyk)\\
&=\phi((h^{-1})^{-1}yk)\\
&=\phi(y)
\end{aligned}\]holds.
Thus, we see that the set $\mathbbm{C}_{\mathcal{H}\backslash\mathcal{G}/\mathcal{K}}$ consists of all functions that take constant values on each double coset. Therefore, we have $\dim \mathbbm{C}_{\mathcal{H}\backslash\mathcal{G}/\mathcal{K}}=|\mathcal{H}\backslash\mathcal{G}/\mathcal{K}|$.
\end{proof}

\begin{definition}
Let $\mathcal{H}$ be a connected wide subgroupoid of $\mathcal{G}$. Define the functor $Y_{\mathcal{H}}:\mathcal{G}\to \vect$ as follows: 

\noindent
$(1)$ For an object $G$, define 
\[Y_{\mathcal{H}}(G):=\{\phi:\mathcal{G}(-,G)\to\mathbb{C}\mid \phi(gh)=\phi(g),\;\forall(g,h)\in \mathcal{G}_1 {}_{\dom}\kern-0.4em\times_{\cod}\,\mathcal{H}_1\},\] 
$(2)$ For a morphism $g$, define $Y_{\mathcal{H}}(g):Y_{\mathcal{H}}(G)\to
Y_{\mathcal{H}}(G')$ by \[f\mapsto f(g^{-1}-).\]
\end{definition}

\begin{theorem}\label{R}
Let $\mathcal{H}$ be a connected wide subgroupoid of $\mathcal{G}$. Then \[\mathbb{C}[\mathcal{G}/\mathcal{H}]\cong Y_{\mathcal{H}}.\]
\end{theorem}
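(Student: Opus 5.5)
We construct an explicit natural isomorphism $\Phi:\mathbb{C}[\mathcal{G}/\mathcal{H}]\Rightarrow Y_{\mathcal{H}}$, sending a coset to the indicator function of that coset. This mimics the group-theoretic identification of $\mathbb{C}[G/H]$ with functions on $G$ that are constant on left cosets.

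\textbf{Definition of $\Phi_G$.} Fix $G\in\mathcal{G}_0$. A basis of $\mathbb{C}[\mathcal{G}/\mathcal{H}](G)$ is $\mathcal{G}/\mathcal{H}(G)=\Mor_\mathcal{G}(-,G)/{\sim_{\mathcal{H},G}}$. For a class $a\mathcal{H}$, let $\Phi_G(a\mathcal{H}):\mathcal{G}(-,G)\to\mathbb{C}$ be the indicator function $\delta_{a\mathcal{H}}$. Explicitly, $\delta_{a\mathcal{H}}(b)=1$ if $b\sim_{\mathcal{H},G}a$, i.e.\ $a^{-1}b\in\mathcal{H}_1$, and $0$ otherwise. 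Extend $\Phi_G$ linearly.

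\textbf{$\Phi_G$ lands in $Y_\mathcal{H}(G)$.} We must check $\delta_{a\mathcal{H}}(bh)=\delta_{a\mathcal{H}}(b)$ whenever $\dom b=\cod h$ and $h\in\mathcal{H}_1$. Since $\mathcal{H}_1$ is closed under composition and inverses, $a^{-1}bh\in\mathcal{H}_1$ if and only if $a^{-1}b\in\mathcal{H}_1$. Here wideness of $\mathcal{H}$ ensures that all identities lie in $\mathcal{H}$, so that $\sim_{\mathcal{H},G}$ is an equivalence relation and the classes are exactly the left ``cosets'' $a\mathcal{H}$.

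\textbf{$\Phi_G$ is bijective.} The defining condition of $Y_\mathcal{H}(G)$ says precisely that $\phi$ is constant on each equivalence class of $\sim_{\mathcal{H},G}$. Indeed, $b\sim b'$ means $b'=bh$ with $h=b^{-1}b'\in\mathcal{H}_1$, and $\dom b=\cod h$ holds automatically. Hence $Y_\mathcal{H}(G)$ is the space of functions on the finite set $\mathcal{G}/\mathcal{H}(G)$. The indicators $\delta_{a\mathcal{H}}$ of the distinct classes form a basis of this space, since the classes partition $\Mor_\mathcal{G}(-,G)$. Therefore $\Phi_G$ sends a basis to a basis and is a linear isomorphism.

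\textbf{Naturality.} For $g:G\to G'$ we check $Y_\mathcal{H}(g)(\delta_{a\mathcal{H}})=\delta_{ga\mathcal{H}}$. Evaluating the left side at $b\in\mathcal{G}(-,G')$ gives $\delta_{a\mathcal{H}}(g^{-1}b)$, which is $1$ if and only if $a^{-1}g^{-1}b=(ga)^{-1}b\in\mathcal{H}_1$, i.e.\ if and only if $\delta_{ga\mathcal{H}}(b)=1$. Since $\mathbb{C}[\mathcal{G}/\mathcal{H}](g)$ sends $a\mathcal{H}$ to $ga\mathcal{H}$, the naturality square commutes on basis elements.

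The main obstacle is bookkeeping rather than substance. One must interpret $\mathcal{G}(-,G)$ as $\Mor_\mathcal{G}(-,G)$, check that $Y_\mathcal{H}(g)$ is well defined (precomposition with $g^{-1}$ preserves right $\mathcal{H}$-invariance, since $g^{-1}bh=(g^{-1}b)h$), and track domains and codomains so that every composite $a^{-1}b$ makes sense. Connectedness of $\mathcal{H}$ is not needed for this argument. Alternatively, one could combine Theorem \ref{ind} with a Kan-extension description of $Y_\mathcal{H}$, but the direct construction above is simpler.
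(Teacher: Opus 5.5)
Your proposal is correct and is essentially the paper's proof: the paper uses the same map $\theta_G\colon g\mathcal{H}\mapsto\delta_{g\mathcal{H}}$ (the indicator of $g\mathcal{H}_1$) and checks naturality on basis elements via $\delta_{g\mathcal{H}}\mapsto\delta_{ug\mathcal{H}}$. You go beyond the paper, which merely asserts that $\theta_G$ is an isomorphism, by verifying that $\delta_{a\mathcal{H}}$ lies in $Y_{\mathcal{H}}(G)$ and that the indicators form a basis. Your remark that connectedness of $\mathcal{H}$ is never used is also accurate.
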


\begin{proof}
For each $G\in\mathcal{G}_0$, define maps as follows: 
\[\begin{array}{rccc}
\theta_G:&\mathbb{C}[\mathcal{G}/\mathcal{H}](G)&\longrightarrow&Y_{\mathcal{H}}(G)\\
 &\rotatebox{90}{$\in$}& &\rotatebox{90}{$\in$}\\
 &g\mathcal{H}&\longmapsto&\delta_{g\mathcal{H}}
\end{array}
\]

\noindent
where, 
\[\delta_{g\mathcal{H}}(k)=
\begin{cases*}
1& \text{if} $k\in g\mathcal{H}_1$,\\
0& \text{otherwise}.
\end{cases*}
\]

For each $G\in\mathcal{G}_0$ , $\theta_G$ is an isomorphism of vector spaces and for each $u:G\to G'$, the diagram. 

\[\xymatrix{
\mathbb{C}[\mathcal{G}/\mathcal{H}](G)\ar[d]_-{\theta_G}\ar[rr]^-{\mathbb{C}[\mathcal{G}/\mathcal{H}](u)}&&\mathbb{C}[\mathcal{G}/\mathcal{H}](G')\ar[d]^-{\theta_{G'}}\\
Y_{\mathcal{H}}(G)\ar[rr]_-{Y_{\mathcal{H}}(u)}&&Y_{\mathcal{H}}(G')}\]
\noindent
defined by
\[\xymatrix{
g\mathcal{H}\ar@{|->}[d]\ar@{|->}[r]&ug\mathcal{H}\ar@{|->}[d]\\
\delta_{g\mathcal{H}}\ar@{|->}[r]&\delta_{ug\mathcal{H}}}\]
\noindent
is commutative. Hence $\theta:=\{\theta_G\}_{G\in\mathcal{G}_0}$ is a natural transformation from $\mathbb{C}[\mathcal{G}/\mathcal{H}]$ to $Y_{\mathcal{H}}$, and we obtain 
\[\mathbb{C}[\mathcal{G}/\mathcal{H}]\cong Y_{\mathcal{H}}.\] 
\end{proof}

\begin{theorem}\label{hom}
Let $\mathcal{H}$ and $\mathcal{K}$ be connected wide subgroupoids of $\mathcal{G}$. Then there exists an isomorphism of $\mathbb{C}$-vector spaces \[\mathbbm{C}_{\mathcal{H}\backslash\mathcal{G}/\mathcal{K}}\cong \vect^\mathcal{G}(Y_\mathcal{H},Y_\mathcal{K}).\]  
\end{theorem}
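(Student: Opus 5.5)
Since $Y_\mathcal{H}(G)$ consists of functions on $\mathcal{G}(-,G)$ that are constant on right $\mathcal{H}$-cosets, the natural plan is a direct "kernel" construction, mirroring the classical identification of $\mathrm{Hom}_G(\mathbb{C}[G/H],\mathbb{C}[G/K])$ with $H$-$K$ bi-invariant functions. By Theorem \ref{R} we may work with the basis $\{\delta_{g\mathcal{H}}\}$ of $Y_\mathcal{H}(G)$, indexed by $\mathcal{G}/\mathcal{H}(G)$, and similarly for $\mathcal{K}$. A natural transformation $\eta:Y_\mathcal{H}\to Y_\mathcal{K}$ is then determined by the matrix coefficients
\[\eta_G(\delta_{a\mathcal{H}})=\sum_{b\mathcal{K}\in\mathcal{G}/\mathcal{K}(G)} c_G(b\mathcal{K},a\mathcal{H})\,\delta_{b\mathcal{K}},\]
for $a\in\Mor_\mathcal{G}(-,G)$. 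To such $\eta$ I will attach the function $\phi_\eta:\mathcal{G}_1\to\mathbb{C}$ defined, for $g:t\to s$, by
\[\phi_\eta(g):=c_{s}(g\mathcal{K},\id_s\mathcal{H}).\]
Here $\id_s\in\Mor_\mathcal{G}(-,s)$ is legitimate because $\mathcal{H}$ is wide, and we evaluate at $G=s=\cod g$.

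First, I check that $\phi_\eta\in\mathbbm{C}_{\mathcal{H}\backslash\mathcal{G}/\mathcal{K}}$. Invariance under $g\mapsto gk$ is immediate, since $gk\mathcal{K}=g\mathcal{K}$. For $g\mapsto h^{-1}g$ with $h:s\to s'$ in $\mathcal{H}$, naturality of $\eta$ along the morphism $h^{-1}:s'\to s$ of $\mathcal{G}$ gives
\[c_{s}(u b\mathcal{K},u a\mathcal{H})=c_{s'}(b\mathcal{K},a\mathcal{H})\qquad\text{for } u=h^{-1}.\]
Since $\id_{s'}\mathcal{H}=h\mathcal{H}$ (as $h\in\mathcal{H}_1$), this yields
\[\phi(h^{-1}g)=c_{s'}(h^{-1}g\mathcal{K},\id_{s'}\mathcal{H})=c_{s}(g\mathcal{K},h\mathcal{H})=c_s(g\mathcal{K},\id_s\mathcal{H})=\phi(g).\]
I must be careful here that the coset $h\mathcal{H}$ lies in $\mathcal{G}/\mathcal{H}(s)$ and equals $\id_s\mathcal{H}$ under $\sim_{\mathcal{H},s}$; this needs $\id_s^{-1}h\in\mathcal{H}_1$, which holds.

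Conversely, given $\phi$, I will define $\eta^\phi$ by
\[c_G(b\mathcal{K},a\mathcal{H}):=\phi(a^{-1}b),\]
where $a:s\to G$ and $b:t\to G$, so that $a^{-1}b:t\to s$. This is well defined because $\phi$ is bi-invariant: $a\mapsto ah$ gives $h^{-1}a^{-1}b$, and $b\mapsto bk$ gives $a^{-1}bk$. Naturality holds because $(ua)^{-1}(ub)=a^{-1}b$. The two assignments are linear and mutually inverse: $\phi_{\eta^\phi}(g)=\phi(\id^{-1}g)=\phi(g)$, while naturality shows that every coefficient $c_G(b\mathcal{K},a\mathcal{H})$ equals $c_s(a^{-1}b\mathcal{K},\id_s\mathcal{H})$, by applying naturality along $a^{-1}:G\to s$.

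The main obstacle is the bookkeeping: the paper's definition of $Y_\mathcal{H}$ uses precomposition $f\mapsto f(g^{-1}-)$, and the domains and codomains of cosets across different objects must be tracked, including whether $\theta$ of Theorem \ref{R} sends $\mathbb{C}[\mathcal{G}/\mathcal{H}](u)$ to $Y_\mathcal{H}(u)$ compatibly. I would verify the naturality identity $c_{G'}(ub\mathcal{K},ua\mathcal{H})=c_G(b\mathcal{K},a\mathcal{H})$ carefully via $\theta$ before proceeding. Connectedness of $\mathcal{H},\mathcal{K}$ is not really needed beyond its use in Theorem \ref{R}.
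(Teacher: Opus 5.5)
Your proof is correct and is essentially the paper's proof, written in the coset basis. Your maps $\eta\mapsto\phi_\eta$ and $\phi\mapsto\eta^\phi$ agree, up to the constants $|\mathcal{G}_G|/|\mathcal{H}_G|$ and its reciprocal, with the paper's $T$ (evaluation on the rescaled identity-coset vector $\delta_G$) and $S$ (the normalized convolution $f\mapsto\frac{1}{|(\Iso\,\mathcal{G})_1|}\sum_x\phi(x^{-1}g)f(x)$). Your identity $c_{G'}(ub\mathcal{K},ua\mathcal{H})=c_G(b\mathcal{K},a\mathcal{H})$ does the work of the paper's step (iv), and dropping the normalizations removes the counting, and hence the connectedness, that step (iii) needs.

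The one slip is an orientation error in the left-invariance check. You need $\cod h=\cod g=s$, i.e.\ $h:s'\to s$, not $h:s\to s'$. With that orientation, the second equality of your displayed chain is naturality along $h^{-1}:s\to s'$ applied to $b=g$, $a=h$. The third equality then uses $h\mathcal{H}=\id_s\mathcal{H}$ in $\mathcal{G}/\mathcal{H}(s)$, not $h\mathcal{H}=\id_{s'}\mathcal{H}$ as you wrote, so the chain itself is right.
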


\begin{proof}
For any $\phi\in\mathbb{C}_{\mathcal{H}\backslash\mathcal{G}/\mathcal{K}}$ and any $G\in\mathcal{G}_0$, we define the map as follows:

\[\begin{array}{rccc}
\theta_G:&Y_\mathcal{H}(G)&\longrightarrow &
Y_\mathcal{K}(G)\\
 &\rotatebox{90}{$\in$}& &\rotatebox{90}{$\in$}\\
 &f&\longmapsto&
 (g\mapsto \frac{1}{|(\Iso\,\mathcal{G})_1|}\sum_{x\in\Mor_{\mathcal{G}}(-,\cod g)}\phi(x^{-1}g)f(x) ).\\
\end{array}
\]

We define the maps $S$ and $T$ as follows: 
\[\begin{array}{rccc}
S:&\mathbbm{C}_{\mathcal{H}\backslash\mathcal{G}/\mathcal{K}}&\longrightarrow&\vect^\mathcal{G}(Y_\mathcal{H},Y_\mathcal{K})\\
 &\rotatebox{90}{$\in$}& &\rotatebox{90}{$\in$}\\
 &\phi&\longmapsto&\{\theta_G\}_{G\in\mathcal{G}_0},
\end{array}
\]

\[\begin{array}{rccc}
T:&\vect^\mathcal{G}(Y_\mathcal{H},Y_\mathcal{K})&\longrightarrow &
\mathbb{C}_{\mathcal{H}\backslash\mathcal{G}/\mathcal{K}} \\
 &\rotatebox{90}{$\in$}& &\rotatebox{90}{$\in$}\\
 &\{\psi_G\}_{G\in\mathcal{G}_0}&\longmapsto&
 T(\{\psi_G\}_{G\in\mathcal{G}_0}).
\end{array}
\]

\noindent
Here, $T(\{\psi_G\}_{G\in\mathcal{G}_0})$ is the map induced by the universal property of coproduct associated with the family $\{\psi_G(\delta_G)\}_{G\in\mathcal{G}_0}$ where $\delta_G$ is defined by
\[\delta_G(g)=\begin{cases*}
\frac{|\mathcal{G}_G|}{|\mathcal{H}_G|}& \text{if} $g\in \mathcal{H}_1$,\\
0& \text{otherwise}.
\end{cases*}
\]

We write $g*f:=Y_{\mathcal{H}}(g)(f)$ for a morphism $g$ and an element $f\in Y_{\mathcal{H}}(\dom g)$. Similarly, we write $g\bullet f:=Y_{\mathcal{K}}(g)(f)$ for a morphism $g$ and an element $f\in Y_{\mathcal{K}}(\dom g)$.\\

(i) Well-definedness of $S$

For $(g,k)\in {\mathcal{G}_1} \,_{\dom}\times_{\cod}\,\mathcal{K}_1$,
 we obtain

$\begin{aligned}
S(\phi)(f)(gk)&=\frac{1}{|(\Iso\,\mathcal{G})_1|}\sum_{x\in\Mor_{\mathcal{G}}(-,\cod g)}\phi(x^{-1}gk)f(x)\\
&=\frac{1}{|(\Iso\,\mathcal{G})_1|}\sum_{x\in\Mor_{\mathcal{G}}(-,\cod g)}\phi(x^{-1}g)f(x)\\
&=S(\phi)(f)(g).\\
\end{aligned}$

For each $u:G\to G'$, we obtain

$\begin{aligned}
(u\bullet \theta_G(f))(g)&=\frac{1}{|(\Iso\,\mathcal{G})_1|}\sum_{x\in\Mor_{\mathcal{G}}(-,\cod u^{-1}g)}\phi(x^{-1}u^{-1}g)f(x)\\
&=\frac{1}{|(\Iso\,\mathcal{G})_1|}\sum_{z\in\Mor_{\mathcal{G}}(-,\cod g)}\phi(z^{-1}g)f(u^{-1}z)\\
&=\theta_{G'}(u*f)(g).
\end{aligned}$

Therefore, $\{\theta_G\}_{G\in\mathcal{G}_0}\in \vect^\mathcal{G}(Y_\mathcal{H},Y_\mathcal{K})$.\\

(ii) Well-definedness of $T$

For $(h,g,k)\in {\mathcal{H}_1}\,_{\cod}\times_{\cod}\,{\mathcal{G}_1} \,_{\dom}\times_{\cod}\,\mathcal{K}_1$,
 we obtain
 
$\begin{aligned}
\psi_{\cod g}(\delta_{\cod g})(g)&=\psi_{\cod g}(\delta_{\cod g})(gk)\\
&=\psi_{\cod g}(h*\delta_{\dom h})(gk)\\
&=(h\bullet\,\psi_{\dom h}(\delta_{\dom h}))(gk)\\
&=\psi_{\dom h}(\delta_{\dom h})(h^{-1}gk).
\end{aligned}$

Therefore, $T(\{\psi_G\}_{G\in\mathcal{G}})\in\mathbb{C}_{\mathcal{H}\backslash\mathcal{G}/\mathcal{K}}$.
\\

(iii) $T\circ S=\id_{\mathbb{C}_{\mathcal{H}\backslash\mathcal{G}/\mathcal{K}}}$

For each $\phi\in\mathbb{C}_{\mathcal{H}\backslash\mathcal{G}/\mathcal{K}}$,

$\begin{aligned}
(T\circ S)(\phi)(g)&=\frac{1}{|(\Iso\,\mathcal{G})_1|}\sum_{x\in\Mor_{\mathcal{G}}(-,\cod g)}\phi(x^{-1}g) \delta_{\cod g}(x)\\
&=\frac{1}{|(\Iso\,\mathcal{H})_1|}\sum_{x\in\Mor_{\mathcal{H}}(-,\cod g)}\phi(x^{-1}g)\\
&=\frac{1}{|(\Iso\,\mathcal{H})_1|}\sum_{x\in\Mor_{\mathcal{H}}(-,\cod g)}\phi(g)\\
&=\phi(g).
\end{aligned}$\\

Therefore, $T\circ S=\id_{\mathbb{C}_{\mathcal{H}\backslash\mathcal{G}/\mathcal{K}}}$.\\

(iv) $S\circ T=\id_{\vect^{\mathcal{G}}(Y_\mathcal{H},Y_\mathcal{K})}$

$Y_\mathcal{H}(G)$ is generated by $\{k_1*\delta_{\dom k_1},\ldots,k_n*\delta_{\dom k_n}\}$ where $\{k_1,\ldots ,k_n\}$ is a complete set of the representatives of $\Mor_\mathcal{G}(-,G)/\sim_{\mathcal{H},G}$.

For each $\psi\in\vect^\mathcal{G}(Y_\mathcal{H},Y_\mathcal{K})$, we obtain

$\begin{aligned}
 (S\circ T)(\psi)_{\cod g}(\delta_{\cod g}) (g)
 &=\frac{1}{|(\Iso\,\mathcal{G})_1|}\sum_{x\in\Mor_{\mathcal{G}}(-,\cod g)}\psi_{\dom x}(\delta_{\dom x})(x^{-1}g)\delta_{\cod g}(x)\\
 &=\frac{1}{|(\Iso\,\mathcal{G})_1|}\sum_{x\in\Mor_{\mathcal{G}}(-,\cod g)}(x\bullet\psi_{\dom x}(\delta_{\dom x}))(g)\delta_{\cod g}(x)\\
 &=\frac{1}{|(\Iso\,\mathcal{G})_1|}\sum_{x\in\Mor_{\mathcal{G}}(-,\cod g)}\psi_{\cod g}(x*\delta_{\dom x})(g)\delta_{\cod g}(x)\\
 &=\frac{1}{|(\Iso\,\mathcal{G})_1|}\sum_{x\in\Mor_{\mathcal{H}}(-,\cod g)}\psi_{\cod g}(\delta_{\cod g})(g)\delta_{\cod g}(x)\\
 &=\frac{1}{|(\Iso\,\mathcal{H})_1|}\sum_{x\in\Mor_{\mathcal{H}}(-,\cod g)}\psi_{\cod g}(\delta_{\cod g})(g)\\
 &=\psi_{\cod g}(\delta_{\cod g})(g).
\end{aligned}$

Hence, 

$\begin{aligned}
 (S\circ T)(\psi)_{\cod k_i}(k_i*\delta_{\dom k_i}) &=k_i\bullet(S\circ T)(\psi)_{\dom k_i}(\delta_{\dom k_i}) \\
 &=k_i\bullet\psi_{\dom k_i}(\delta_{\dom k_i})\\
 &=\psi_{\cod k_i}(k_i*\delta_{\dom k_i}).
\end{aligned}$

holds, and therefore $S\circ T=\id_{\vect^{\mathcal{G}}(Y_\mathcal{H},Y_\mathcal{K})}$.\\

From (i) $\sim$ (iv), we obtain
\[\mathbb{C}_{\mathcal{H}\backslash\mathcal{G}/\mathcal{K}}\cong \vect^\mathcal{G}(Y_\mathcal{H},Y_\mathcal{K}).\].

\end{proof}

\begin{corollary}
Let $\mathcal{H}$ and $\mathcal{K}$ be connected wide subgroupoids of $\mathcal{G}$. 
Then, 
\[
|\mathcal{H}\backslash\mathcal{G}/\mathcal{K}|=\langle \chi_{\Ind^\mathcal{G}_\mathcal{H}\Tri},\chi_{\Ind^\mathcal{G}_\mathcal{K}\Tri} \rangle\]
holds.
\end{corollary}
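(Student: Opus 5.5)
The corollary follows by stringing together the results of Sections 4 and 5 into a chain of equalities. I will start from the left-hand side and use Theorem \ref{dim}, which gives
\[|\mathcal{H}\backslash\mathcal{G}/\mathcal{K}|=\dim \mathbb{C}_{\mathcal{H}\backslash\mathcal{G}/\mathcal{K}}.\]
Theorem \ref{hom} then identifies this space, as a $\mathbb{C}$-vector space, with $\vect^\mathcal{G}(Y_\mathcal{H},Y_\mathcal{K})$. Hence the count of double cosets is $\dim\vect^\mathcal{G}(Y_\mathcal{H},Y_\mathcal{K})$.

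Next I will replace $Y_\mathcal{H}$ and $Y_\mathcal{K}$ by the induced representations. By Theorem \ref{R} we have $Y_\mathcal{H}\cong\mathbb{C}[\mathcal{G}/\mathcal{H}]$, and by Theorem \ref{ind} we have $\mathbb{C}[\mathcal{G}/\mathcal{H}]\cong\Ind^\mathcal{G}_\mathcal{H}\Tri$; the same holds for $\mathcal{K}$. Both theorems apply because $\mathcal{H}$ and $\mathcal{K}$ are connected wide subgroupoids. Natural isomorphisms $Y_\mathcal{H}\cong\Ind^\mathcal{G}_\mathcal{H}\Tri$ and $Y_\mathcal{K}\cong\Ind^\mathcal{G}_\mathcal{K}\Tri$ induce, by pre- and post-composition, a linear isomorphism of Hom-spaces in the functor category $\vect^\mathcal{G}$. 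Therefore
\[\dim\vect^\mathcal{G}(Y_\mathcal{H},Y_\mathcal{K})=\dim\vect^\mathcal{G}(\Ind^\mathcal{G}_\mathcal{H}\Tri,\Ind^\mathcal{G}_\mathcal{K}\Tri).\]

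Finally, Corollary \ref{product} converts this dimension into $\dim\Hom_\mathcal{G}(V,V')$, where $V$ and $V'$ are the representation spaces of the two induced representations. The lemma cited from \cite[Theorem 4]{IR19} identifies that dimension with $\langle\chi_{\Ind^\mathcal{G}_\mathcal{H}\Tri},\chi_{\Ind^\mathcal{G}_\mathcal{K}\Tri}\rangle$, which completes the chain.

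The main point requiring care is hypothesis compatibility. Theorem \ref{hom} and Theorem \ref{R} require connected wide subgroupoids, while Theorem \ref{ind} only asks for connectedness; all of these hold under the hypotheses of the corollary. The \cite{IR19} lemma is stated for arbitrary representations, so no additional condition on $\mathcal{G}$ beyond finiteness is needed. I therefore expect no real obstacle: the corollary is a direct composition of isomorphisms and dimension identities.
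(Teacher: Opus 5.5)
Your proposal is correct and follows essentially the paper's own proof: the same chain through Theorems \ref{dim} and \ref{hom}, then Theorems \ref{R} and \ref{ind} to replace $Y_\mathcal{H},Y_\mathcal{K}$ by $\Ind^\mathcal{G}_\mathcal{H}\Tri,\Ind^\mathcal{G}_\mathcal{K}\Tri$, and finally Corollary \ref{product} with the lemma cited from \cite{IR19}. One refinement of your last paragraph: $\mathcal{G}$ is automatically connected here, since $\mathcal{H}$ is a connected wide subgroupoid, and this connectedness is what makes the object-averaged inner product agree with $\dim\vect^\mathcal{G}(-,-)$, rather than the result holding for an arbitrary finite $\mathcal{G}$.
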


\begin{proof}
\[\begin{aligned}
|\mathcal{H}\backslash\mathcal{G}/\mathcal{K}|&=\dim (\vect^\mathcal{G}(Y_\mathcal{H},Y_\mathcal{K}))&\text{(By Theorem \ref{dim} and Theorem \ref{hom})}\\
&= \dim (\vect^\mathcal{G}(\mathbb{C}[\mathcal{G}/\mathcal{H}],\mathbb{C}[\mathcal{G}/\mathcal{K}]))&\text{(By Theorem \ref{R})}\\
&=\dim (\vect^\mathcal{G}(\Ind^\mathcal{G}_\mathcal{H}\Tri,\Ind^\mathcal{G}_\mathcal{K}\Tri))&\text{(By Theorem \ref{ind})}\\
&=\langle \chi_{\Ind^\mathcal{G}_\mathcal{H}\Tri},\chi_{\Ind^\mathcal{G}_\mathcal{K}\Tri} \rangle&\text{(By Corollary \ref{product}).}
\end{aligned}\]

\end{proof}

\section*{Acknowledgement}
The author sincerely thanks his supervisor, Professor Fumihito Oda, for his support and valuable comments.


\begin{thebibliography}{99}
\bibitem[AM20]{AM20} J. \'{A}vila; V. Mar\'{i}n, The Notions of Center, Commutator and Inner Isomorphism for Groupoids, Ingenier\'{i}a y Ciencia, 16(31)(2020), 7-26.
\bibitem[BD20]{BD20} P. Balmer and I. Dell'Ambrogio, Mackey 2-functors and Mackey 2-motives (European Mathmatical'Society (EMS), Z\"{u}rich, 2020).
\bibitem[BE19]{BE19} J. J. Barbar\'{a}n S\'{a}nchez and L. EI Kaoutit, Linear Representations and Frobenius Morphisms of Groupoids, SIGMA, \textbf{15} (2019), 019, 33pp.
\bibitem[BGLPT23]{BGLPT23} G. Beier, C. Garcia,W. G. Lautenschlaeger, J. Pedrotti and T. Tamusiunas. Generalizations of Lagrange and Sylow theorems for groupoids, S$\tilde{a}$o Paulo J. Math. pages 1-20, 2023.
\bibitem[Bra27]{Bra27} H. Brandt, \"{U}ber eine Verallgemeinerung des Gruppenbegriffes, Math. Ann., \textbf{96} (1927), 360-366.
\bibitem[IR19]{IR19} A. Ibort, M. A. Rodr\'{i}guez. On the structure of finite groupoids and their representations. Symmetry, \textbf{11}, 414 (2019).
\bibitem[Iva02]{Iva02} G. Ivan, Algebraic constructions of Brandt Groupoids, Proceeding of the Algebra Symposium, Babes-Bolyai University Cluj, 69-90, 2002.
\bibitem[MP22]{MP22} V. Mar\'{i}n, H. Pinedo, Groupoids: Direct products, semidirect products and solvability, Algebra and Discrete Math., 33(2), (2022), 92-107. Doi:10.12958/adm 1772.
\end{thebibliography}
\end{document}